\def\titlerunning#1{\gdef\titrun{#1}}
\def\author#1{\gdef\autrun{\def\and{\unskip, }#1}\gdef\@author{#1}}
\def\address#1{{\def\and{\\\hspace*{18pt}}\renewcommand{\thefootnote}{}%
\footnote {#1}}%
\markboth{\autrun}{\titrun}}
\def\email#1{e-mail: #1}
\def\subjclass#1{{\renewcommand{\thefootnote}{}%
\footnote{\emph{Mathematics Subject Classification (2010):} #1}}}
\def\keywords#1{\par\medskip
\noindent\textbf{Keywords.} #1}
\newtheorem{theorem}{Theorem}[section]
\newtheorem*{theorem*}{Theorem}
\newtheorem{prop}[theorem]{Proposition}
\newtheorem{obs}[theorem]{Observation}
\theoremstyle{definition}
\newtheorem{remark}[theorem]{Remark}
\newtheorem*{remark*}{Remark}
\theoremstyle{definition}
\numberwithin{equation}{section}
\newcommand{\conv}{\mbox{conv}}
\newcommand{\fts}{\footnotesize}
\begin{document}


\baselineskip=17pt


\titlerunning{Two extensions of the Erd\H os--Szekeres problem}

\title{Two extensions of the Erd\H os--Szekeres problem}

\author{
Andreas F. Holmsen 
\and 
Hossein Nassajian Mojarrad
\and
J\'{a}nos Pach
\and
G\'abor Tardos
}

\date{}

\maketitle

\address{
A.~F.~Holmsen: Department of Mathematical Sciences, KAIST,
Daejeon; \email{andreash@kaist.edu}
\and
H.~N.~Mojarrad: Courant Institute, New York University, New York;
\email{sn2854@nyu.edu}
\and
J. Pach: R{\'e}nyi Institute, Budapest and 
Moscow Institute of Physics and Technology, Moscow; \email{pach@cims.nyu.edu}
\and
G. Tardos: R{\'e}nyi Institute, Budapest and Department of Mathematics, Central European University, Budapest; \email{tardos@renyi.hu}
}

\subjclass{Primary 52C10; Secondary 52C30}

\keywords{Erd{\H o}s--Szekeres conjecture,  arrangements of pseudolines}

\begin{abstract}
According to Suk's breakthrough result on the Erd\H os--Szekeres problem, any point set in general position in the plane, which has no $n$ elements that form the vertex set of a convex $n$-gon, has at most $2^{n+O\left({n^{2/3}\log n}\right)}$ points. We strengthen this theorem in two ways. First, we show that the result generalizes to convexity structures induced by pseudoline arrangements. Second, we improve the error term.

A family of $n$ convex bodies in the plane is said to be in {\em convex position} if the convex hull of the union of no $n-1$ of its members contains the remaining one. If any {\em three} members are in convex position, we say that the family is in {\em general position}. Combining our results with a theorem of Dobbins, Holmsen, and Hubard, we significantly improve the best known upper bounds on the following two functions, introduced by Bisztriczky and Fejes T\'oth and by Pach and T\'oth, respectively. Let $c(n)$ (and $c'(n)$) denote the smallest positive integer $N$ with the property that any family of $N$ pairwise disjoint convex bodies in general position (resp., $N$ convex bodies in general position, any pair of which share at most two boundary points) has an $n$-membered subfamily in convex position. We show that $c(n)\le c'(n)\leq 2^{n+O\left(\sqrt{n\log n}\right)}$.
\end{abstract}

\section{Introduction}
We say that a set of $n$ points in the plane is in {\em convex position} if the convex hull of no $n-1$ of them contains the $n$-th point. If no three elements of the set are collinear (that is, any three points are in convex position), then the set is said to be in {\em general position}. According to a classical conjecture of Erd\H os and Szekeres~\cite{erd-sze1}, if $P$ is a set of points in general position in the plane with $|P|\ge 2^{n-2}+1$, then it has $n$ elements in convex position. This bound, if true, cannot be improved~\cite{erd-sze2}. In a recent breakthrough, Suk~\cite{andrew} came close to proving the conjectured bound.

\begin{theorem} [Suk, 2017]\label{suk}
Given any integer $n\geq 3$, let $e(n)$ denote the smallest number with the property that every family of at least $e(n)$ points in general position in the plane has $n$ elements in convex position. Then we have
$$e(n)\leq 2^{n+O(n^{2/3}\log n)}.$$
\end{theorem}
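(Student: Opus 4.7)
The plan is to build on the classical cup--cap theorem of Erd\H os and Szekeres and refine it via a three-parameter induction. Recall that a \emph{$k$-cup} (resp.\ a \emph{$k$-cap}) is a sequence of $k$ points in general position with strictly increasing $x$-coordinates that lie on the lower (resp.\ upper) boundary of their convex hull. The classical theorem states that any set of more than $\binom{k+\ell-4}{k-2}$ points in general position contains a $k$-cup or an $\ell$-cap. Setting $k=\ell=n$ yields $e(n)\le\binom{2n-4}{n-2}+1\sim 4^{n}/\sqrt{n}$, already close to the conjectured $2^{n-2}+1$ but off by a factor of roughly $2^n/\mathrm{poly}(n)$. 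The goal is to shave almost all of this surplus.

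To close most of the gap, I would prove a strengthened version of cup--cap that also permits escape via a convex polygon. Define $g(n,k,\ell)$ to be the smallest $N$ such that every $N$-point set in general position contains either $n$ points in convex position, a $k$-cup, or an $\ell$-cap. The cup--cap bound still gives $g(n,k,\ell)\le\binom{k+\ell-4}{k-2}+1$, but the convex-position escape clause should allow a recursion of the shape
$$g(n,k,\ell)\le g(n-t,k-1,\ell)+g(n-t,k,\ell-1)$$
for some gain parameter $t=t(k,\ell)\ge 1$, reflecting the idea that a long cup and a long cap sharing endpoints can be concatenated into a convex polygon of their combined length. Whenever the recursive call already produces such structures, we can either grow them into a cup or cap that is one longer, or splice them into a convex polygon and reduce $n$ by $t$. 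Unrolling the recursion with $k\approx n^{1/3}$ balances its depth (contributing $\sim n/k$ to the exponent) against the cup--cap cost (contributing $\sim k\log n$), yielding the target bound $e(n)\le 2^{n+O(n^{2/3}\log n)}$.

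The main obstacle is the \emph{gluing step} that justifies such a recursion: we must show that a point set containing many cups and caps in prescribed positions actually contains a long convex polygon. Concretely, I expect one has to take a point $p$ that is the rightmost endpoint of many cups and the leftmost endpoint of many caps simultaneously, then run another Erd\H os--Szekeres-type pigeonhole on the slopes of the attached pieces to ensure compatibility, so that the concatenation is genuinely convex and not merely monotone. Controlling how much is lost at each such pigeonhole, and ensuring that the parameters telescope cleanly across $\Theta(n^{2/3})$ recursive steps without blowing up the constants, is where the $\log n$ factor enters; removing it appears to require a genuinely new structural idea beyond cup--cap pigeonholing.
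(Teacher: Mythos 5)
Your proposal is a plan rather than a proof: the step you yourself flag as ``the main obstacle'' --- the gluing step that would justify the recursion $g(n,k,\ell)\le g(n-t,k-1,\ell)+g(n-t,k,\ell-1)$ --- is exactly where all the content of the theorem lies, and it is not clear it can be made to work in the form you describe. A cup and a cap that merely share one endpoint do not concatenate into a convex polygon; that is precisely the dichotomy exploited in the classical cup--cap recursion, which yields a longer cup \emph{or} a longer cap at such a shared point, never a polygon. To obtain a convex polygon one needs a cup and a cap sharing \emph{both} endpoints, and even then one only gets $k+\ell-2$ points in convex position, which does not translate into a per-step gain $t\ge 1$ that telescopes into an exponent of $n+o(n)$. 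Your parameter count also does not match your target: with $k\approx n^{1/3}$ the two contributions you list are $n/k\approx n^{2/3}$ and $k\log n\approx n^{1/3}\log n$, which are not balanced.

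The actual mechanism (in Suk's proof and in the strengthening proved in this paper, from which Theorem~\ref{suk} follows since $b(n)\ge e(n)$) is different. One first applies a positive-fraction Erd\H os--Szekeres theorem (Theorem~\ref{clusterings}) to find a $k$-element set $X$ in convex position whose $k$ spikes are all well populated, with $\prod_i|P_i|\ge N^k/2^{O(k^2)}$. Inside each spike one introduces two ``orthogonal'' partial orders (vertical and horizontal) so that Dilworth gives $|P_i|\le v_ih_i$; each of $v_i$ and $h_i$ is then bounded by a cup--cap (transitive coloring) argument in terms of the lengths of four kinds of convex chains, and the crucial savings come from the fact that chains in \emph{different} spikes can be joined into a single set in convex position (Observations~\ref{joining verticals} and~\ref{joining horizontals}), forcing $b_i+a_{i+1}<n$ and $\sum_i c_i<2n$. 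Multiplying over the $k$ spikes gives $N\le 2^{n+2n\log n/k+O(k)}$, and choosing $k\approx\sqrt{n\log n}$ yields the stronger bound $2^{n+O(\sqrt{n\log n})}$. None of these ingredients --- the positive-fraction clustering, the Dilworth decomposition, or the cross-spike joining --- appears in your outline, so the missing piece is not a technicality but the core of the argument.
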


\paragraph{\bf Pseudo-configurations.}
A set of simple continuous curves in the Euclidean plane that start and end ``at infinity'' is called an {\em arrangement of pseudolines} if any two of them meet in precisely one point: at a proper crossing. A {\em pseudo-configuration} is a finite set of points $P$ in the Euclidean plane such that each pair of distinct points $p$ and $q$ in $P$ span a unique {\em pseudoline}, denoted by $\ell(p,q)$ such that $L(P) = \{\ell(p,q) : p, q \in P, p\neq q\}$ form a pseudoline arrangement and for any $p,q\in P$, $p\ne q$ we have $\ell(p,q)=\ell(q,p)$ and $\ell(p,q)\cap P=\{p,q\}$; see \cite{goody}.

\begin{figure}\centering
\begin{tikzpicture}
\node at (0,0) {\includegraphics[height=6.4cm]{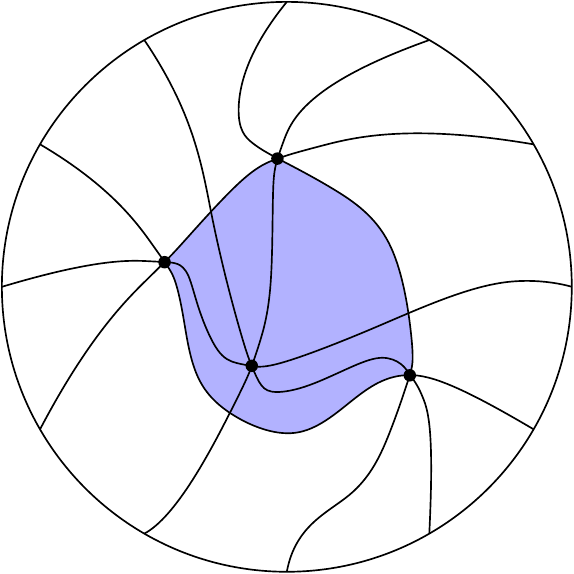}};
\node at (-1.43,0) {\fts $p$};
\node at (96:1.7cm) {\fts $q$};
\node at (1.6,-.8) {\fts $r$};
\node at (-.2,-.7) {\fts $s$};
\node at (37:2.5cm) {\tiny $\ell(p,q)$};
\node at (-4:2.5cm) {\tiny $\ell(p,s)$};
\node at (90:2.5cm) {\tiny $\ell(q,r)$};
\node at (128:2.5cm) {\tiny $\ell(r,s)$};
\node at (159:2.5cm) {\tiny $\ell(p,r)$};
\node at (252:2.5cm) {\tiny $\ell(q,s)$};
\node at (17:3.4cm) {\tiny $\ell_\infty$};
\end{tikzpicture}
\caption{A pseudo-configuration of four points with the convex hull shaded.}
\label{pseudo}
\end{figure}

This underlying pseudoline arrangement induces a convexity structure on the point configuration in a natural way. For any pair of points $p, q\in P$, the bounded portion of $\ell(p,q)$ between $p$ and $q$ is called the {\em pseudosegment} connecting $p$ and $q$. If we delete from the plane all pseudosegments between the elements of $P$, the plane is divided into a number of connected components, precisely one of which is unbounded. The {\em convex hull} of the configuration is defined as the complement of the unbounded region, and is denoted by $\conv P$. We say that a subset $Q\subseteq P$ is in {\em convex position} if no point $p\in Q$ is in the convex hull of $Q\setminus \{p\}$.\footnote{Pseudo-configurations also have a purely combinatorial characterization. They can be defined by several equivalent systems of axioms. Other names for pseudo-configurations that can be found in the literature are {\em generalized configurations} \cite{goody}, {\em uniform rank 3 acyclic oriented matroids} \cite{OMS}, and {\em CC-systems} \cite{knuti}.}

It turns out that for four points there are only two combinatorially distinct pseudo-configurations and both can be obtained from straight lines, but for five or more points there exist pseudo-configurations that are not realizable by straight lines. Still, the number of possible pseudo-configurations on five points is finite and we will leave the verification of some simple statements about at most five points in a pseudo-configuration to the reader. (This applies in particular to Observations \ref{trivial}  and \ref{trivial2}, below.)

Many basic theorems of convexity hold in this more general setting. For instance, a set of points is in convex position if and only if every four of its elements are in convex position \cite{ragavan}. This is Carath\'{e}odory's theorem in the plane.

Goodman and Pollack \cite{goopolES} proposed the generalization of the Erd\H os-Szekeres problem to pseudo-configurations. The original ``cup-cap'' proof due to Erd{\H o}s and Szekeres \cite{erd-sze1} readily generalizes to this setting:

\begin{theorem} \label{weak pseudo ES}
Let $P$ be a pseudo-configuration. If $|P|\geq 4^n$, then $P$ contains an $n$-element subset in convex position.
\end{theorem}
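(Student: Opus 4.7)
The plan is to mimic the original ``cup-cap'' argument of Erd\H os and Szekeres~\cite{erd-sze1}. For straight lines, that argument shows that any set of at least $\binom{2n-4}{n-2}+1$ points in general position contains either an $n$-cup or an $n$-cap, and in both cases the resulting $n$ points are in convex position. Since $\binom{2n-4}{n-2}+1 \le 4^{n-2}+1 \le 4^n$, the stated bound will follow once the argument is lifted from straight lines to pseudoline arrangements.

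First, I would fix a linear order on $P$. Any pseudoline arrangement admits a sweep, so by choosing an auxiliary pseudoline $\ell_\infty$ disjoint from $P$ (playing the role of the line at infinity as in Figure~\ref{pseudo}) and ordering the points of $P$ accordingly, one obtains a total order ``$<$'' on $P$, together with a consistent notion of the ``upper'' and ``lower'' sides of each pseudoline $\ell(p,q)\in L(P)$. For three points $p<q<r$ of $P$, I would call the triple a \emph{cup} if $q$ lies below $\ell(p,r)$ and a \emph{cap} if $q$ lies above; a sequence $p_1<\cdots<p_k$ is then a $k$-cup (resp.\ $k$-cap) if every three of its consecutive members form a cup (resp.\ cap).

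The next step is to verify two combinatorial properties that are self-evident for straight lines but must be re-derived from the pseudo-configuration axioms: (a) every ordered subtriple of a $k$-cup is again a cup (so the cup condition holds for \emph{every} ordered triple drawn from a $k$-cup), and analogously for caps; and (b) every $k$-cup and every $k$-cap is in convex position. Property (b) reduces by the Carath\'eodory-type theorem cited from \cite{ragavan} to the case of $4$-element subsets, which can be checked by hand using the fact that there is essentially a unique combinatorial type of convex four-point pseudo-configuration. With (a) and (b) in place, I would run the classical inductive proof of the cup-cap lemma: letting $f(k,\ell)$ denote the maximum size of a pseudo-configuration with no $k$-cup and no $\ell$-cap, one derives the recursion $f(k,\ell)\le f(k-1,\ell)+f(k,\ell-1)$ by partitioning $P$ according to whether each point is the right endpoint of a $(k-1)$-cup, and using (a) to splice cups and caps together. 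This yields $f(n,n)\le\binom{2n-4}{n-2}<4^n$, so $|P|\ge 4^n$ forces either an $n$-cup or an $n$-cap, and hence by (b) an $n$-element subset in convex position.

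The principal obstacle I anticipate lies in establishing property (a) and the combinatorial manipulations underlying the induction. In the Euclidean case these rest on concrete slope-comparison facts of the form ``if $q$ is below $\ell(p,r)$ and $r$ is below $\ell(q,s)$ then $q$ is below $\ell(p,s)$,'' each of which must be re-derived from the axioms of pseudo-configurations (equivalently, the chirotope axioms of a uniform acyclic rank-$3$ oriented matroid). Once this translation from geometry to oriented-matroid combinatorics is in place, the rest of the proof is essentially the bookkeeping of the original Erd\H os--Szekeres argument.
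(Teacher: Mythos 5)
Your proposal follows exactly the route the paper intends: the paper gives no written proof of this theorem, stating only that the original Erd\H os--Szekeres ``cup-cap'' argument readily generalizes to pseudo-configurations, which is precisely the lift you carry out (and the transitivity facts you flag as the main obstacle are the same ones the paper later packages as the notion of a transitive coloring in Theorem~\ref{transitive}). The approach and the bound $f(n,n)\le\binom{2n-4}{n-2}<4^n$ are correct, so this matches the paper's argument in substance.
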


The purpose of this note is to show that Suk's breakthrough result, Theorem~\ref{suk} carries over to pseudo-configurations. In the process we also improve on the error term.

\begin{theorem} \label{gen conf ES}
Given any $n\geq 3$, let $b(n)$ denote the smallest number such that every pseudo-configuration of size at least $b(n)$ has $n$ members in convex position. Then we have
\[b(n) \leq 2^{n+O\left(\sqrt{n\log n}\right)}.\]
\end{theorem}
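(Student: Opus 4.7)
The plan is to transport Suk's proof of Theorem~\ref{suk} into the pseudo-configuration world while sharpening the parameter balance inside his main recursion, in order to drive the exponential error term from $O(n^{2/3}\log n)$ down to $O(\sqrt{n\log n})$.

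First, I would verify that the classical Erd{\H o}s--Szekeres cup--cap dichotomy holds verbatim for pseudo-configurations. Fixing a pseudoline $\ell_\infty$ disjoint from a pseudo-configuration $P$ induces a linear ``left-to-right'' order on $P$, and for any two points $p<q$ the pseudoline $\ell(p,q)$ has well-defined ``above'' and ``below'' sides that interact transitively thanks to the pseudoline arrangement axioms. Define cups and caps with respect to these sides. Because the original Erd{\H o}s--Szekeres recursion uses only these ordering and separation properties, it goes through unchanged, yielding that any pseudo-configuration of size at least $\binom{k+\ell-4}{k-2}+1$ contains a $k$-cup or an $\ell$-cap. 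This is the quantitative engine behind Theorem~\ref{weak pseudo ES}, and I would use it as the base estimate throughout the recursion.

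Next, I would import Suk's recursion. Pick a leftmost point $p$ of $P$; the pseudolines $\{\ell(p,q):q\in P\setminus\{p\}\}$ all pass through $p$ and thereby inherit a canonical cyclic order around $p$, which produces an angular ordering of $P\setminus\{p\}$. Partition this angular ordering into consecutive ``slabs'' $S_1,\ldots,S_t$ of prescribed sizes. The key observation, which transfers cleanly to pseudo-configurations via the combinatorial convex-hull definition in the excerpt, is that any polygon in convex position containing $p$ meets the slabs in a restricted pattern: crossing a slab boundary toggles a cup-extension into a cap-extension. Consequently, if some slab has size at least $b(n-1)$, the inductive hypothesis combined with $p$ produces an $n$-gon, giving a recurrence of the form $b(n)\le t\cdot b(n-1)\cdot F(t)$, where $F(t)$ is a cup--cap overhead coming from the bound in the previous paragraph.

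The hardest part, and where the quantitative improvement appears, is the choice of the slab count $t=t(n)$. Suk's $t\sim n^{2/3}$ yields only the weaker $O(n^{2/3}\log n)$ correction. To achieve $O(\sqrt{n\log n})$, I would take $t\sim\sqrt{n/\log n}$, so that each step of the recursion contributes only $1+O(\sqrt{\log n/n})$ bits to $\log b(n)$; unrolling the recurrence $\log b(n)\le \log b(n-1)+1+O(\sqrt{\log n/n})$ telescopically then gives $\log b(n)\le n+O(\sqrt{n\log n})$. The subtle step is checking that no stray logarithmic factors slip in when recasting the slab-crossing argument in the pseudo-configuration setting: having no notion of slope, one must argue purely combinatorially with the local cyclic order of pseudolines through $p$ and the transitivity of the ``above/below'' relation, and verify that the bookkeeping needed to extend a cup or cap by $p$ across a slab boundary still yields just one extra point per boundary crossing.
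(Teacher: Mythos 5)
Your opening step (the cup--cap theorem carries over to pseudo-configurations) is fine and corresponds to Theorem~\ref{weak pseudo ES} and the transitive-coloring bound of Theorem~\ref{transitive}, but the engine of your argument has two genuine gaps, and it also misremembers what Suk actually does. The recurrence $b(n)\le t\cdot b(n-1)\cdot F(t)$ is unsupported: the step ``if some slab has size at least $b(n-1)$, the inductive hypothesis combined with $p$ produces an $n$-gon'' is precisely the step that fails in every naive induction on this problem, because convex position is not preserved by adjoining a point --- the apex $p$ may lie inside the hull of the $(n-1)$-gon found in the slab, or one of its vertices may be absorbed into the hull of the others together with $p$ --- and the ``slab-crossing toggles cups into caps'' observation is never made precise enough to repair this. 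Moreover, even granting the recurrence, the arithmetic does not deliver the theorem: unrolling $b(n)\le t\,b(n-1)F(t)$ contributes $\log_2 t+\log_2 F(t)\ge\tfrac12\log_2 n-O(\log\log n)$ bits per step (since $t\sim\sqrt{n/\log n}$), not $1+O\bigl(\sqrt{\log n/n}\bigr)$ bits, so telescoping gives $2^{\Omega(n\log n)}$, which is weaker even than Theorem~\ref{weak pseudo ES}.

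For comparison, the actual proof (Suk's and the paper's alike) is not a leftmost-point recursion. One applies a positive-fraction Erd\H{o}s--Szekeres theorem (Theorem~\ref{clusterings}) to extract a $k$-element set $X$ in convex position whose spikes contain point sets $P_i$ with $\prod_i|P_i|\ge N^k/2^{8k^2}$; on each $P_i$ one defines two ``orthogonal'' partial orders so that $|P_i|\le v_ih_i$ by Dilworth, bounds $v_i$ and $h_i$ by binomial coefficients via transitive colorings (left/right and inner/outer convex chains), and then uses joining lemmas ($d_i<n$, $b_i+a_{i+1}<n$, $\sum_i c_i<2n$) to bound all parameters in terms of $n$. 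The improved error term comes from optimizing $k\approx\sqrt{n\log n}$ in the resulting inequality $N<2^{\,n+2n\log n/k+8k}$, so your instinct that the gain is a rebalancing of a free parameter is correct; but that parameter is the size of the convex polygon supplied by the positive-fraction theorem, not a slab count in a recursion that is not established.
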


Clearly, $b(n)\ge e(n)$ holds for all $n$, thus our results also bounds the function $e(n)$ defined for the original Erd\H os-Szekeres problem (cf. Theorem~\ref{suk}).

\begin{remark}
In our proof of Theorem \ref{gen conf ES}, for the sake of clarity, we do not focus on the constant in error-term in the bound on $b(n)$. However, with a less wasteful calculation (given at the end of the paper) we obtain the bound 
\[b(n)\leq 2^{n + \left( \frac{8\sqrt{2}}{3} +o(1) \right) \sqrt{n\log n}}.\]
\end{remark}

\paragraph{\bf Families of convex bodies.}
Bisztriczky and G. Fejes T\'oth \cite{bizft1,biszFT2} gave another (seemingly unrelated) generalization of the Erd\H os-Szekeres problem in 1989 by replacing point sets with families of pairwise disjoint convex bodies. They defined $n$ convex bodies to be in {\em convex position} if the convex hull of no $n-1$ of them contains the remaining one. If any three members of a family of convex bodies are in convex position, then the family is in {\em general position}. In their pioneering paper, Bisztriczky and Fejes T{\'o}th proved that for any $n\ge 3$, there exists a smallest integer $c(n)$ with the following property. If $\mathcal F$ is a family of pairwise disjoint convex bodies in general position in the plane with $|\mathcal F|\ge c(n)$, then it has $n$ members in convex position. They conjectured that $c(n)=e(n)$. The first singly-exponential upper bound on $c(n)$ was established by Pach and T\'oth~\cite{PachTothBodies}. They extended the statement to families of pairwise {\em noncrossing} convex bodies, that is, to convex bodies that may intersect, but any pair can share at most two boundary points~\cite{PachToth1}. This assumption is necessary.

\begin{theorem} [Pach--T\'oth, 2000]
For any integer $n\geq 3$, there exists a smallest number $c'(n)$ with the following property. Any family of at least $c'(n)$ pairwise noncrossing convex bodies in general position in the plane has $n$ members in convex position.
\end{theorem}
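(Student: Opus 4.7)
The plan is to reduce this statement about families of pairwise noncrossing convex bodies to the statement about pseudo-configurations already established in Theorem~\ref{gen conf ES}. The key ingredient is the theorem of Dobbins, Holmsen, and Hubard alluded to in the abstract, which asserts that any family $\mathcal{F}$ of pairwise noncrossing convex bodies in general position in the plane admits a \emph{system of representatives}: a choice of one point from each body such that the resulting point set forms a pseudo-configuration, and such that a subfamily $\mathcal{G} \subseteq \mathcal{F}$ is in convex position in the sense of convex bodies if and only if the corresponding subset of representative points is in convex position in the pseudo-configuration sense.

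Granting this reduction, the argument is immediate. Let $b(n)$ be the function from Theorem~\ref{gen conf ES}, and suppose $|\mathcal{F}| \geq b(n)$. Apply Dobbins--Holmsen--Hubard to obtain a pseudo-configuration $P$ of size $|\mathcal{F}|$. By Theorem~\ref{gen conf ES}, $P$ contains $n$ points in convex position; the corresponding $n$ members of $\mathcal{F}$ are then in convex position by the equivalence built into the reduction. This establishes existence of $c'(n)$ and yields the bound $c'(n) \leq b(n) \leq 2^{n + O(\sqrt{n\log n})}$ promised in the abstract.

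The hard part, and the reason Pach--T\'oth's original proof was much more involved, is of course the Dobbins--Holmsen--Hubard construction itself. One must produce representative points together with pseudolines through every pair of them that together form a bona fide pseudoline arrangement, and then check that the convexity structure it induces on the representatives matches the convex-hull structure of the original bodies. The noncrossing hypothesis is essential at precisely this step: it is what lets one route a pseudoline through two representatives without its being forced to cross some third body along two separate arcs, which would destroy either the single-crossing property of a pseudoline pair or the agreement between the two convexity notions. Once this structural result is invoked as a black box, the quantitative content of the Pach--T\'oth theorem (and its sharpening) follows purely from our pseudo-configuration estimate.
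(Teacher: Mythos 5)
Your reduction is correct and is precisely the route the paper itself takes: it invokes Theorem~\ref{pseudo to noncross} (Dobbins--Holmsen--Hubard) as a black box and combines it with Theorem~\ref{gen conf ES} to get $c'(n)\le b(n)$, which yields both the existence of $c'(n)$ and the bound of Theorem~\ref{main bft}. The only quibble is that you state the Dobbins--Holmsen--Hubard correspondence as an ``if and only if'' with representative points inside the bodies, whereas the paper only needs (and only states) the one implication that convex position of $S\subseteq P$ forces convex position of $\varphi(S)$ --- but that is exactly the direction your argument uses, so nothing is affected.
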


Clearly, we have $c'(n)\ge c(n)\ge e(n)$ for every $n$. The original upper bound on $c'(n)$ was subsequently improved by Hubard, Montejano, Mora, and  Suk \cite{hubsuk} and by Fox, Pach, Sudakov, and Suk \cite{FPSS} to $2^{O(n^2\log n)}$, and later by Dobbins, Holmsen, and Hubard~\cite{dobbs1} to $4^n$. More importantly from our point of view, they showed that there is an intimate relationship between the generalizations of the Erd\H os-Szekeres problem to non-crossing convex bodies and to pseudo-configurations. The following is the union of Lemmas~2.4 and~2.7 in their paper.

\begin{theorem}[Dobbins--Holmsen--Hubard, 2014] \label{pseudo to noncross}
Let $\mathcal F$ be a family of pairwise noncrossing convex bodies in general position in the plane. There exists a pseudo-configuration $P$ and a bijection $\varphi : P \to {\mathcal F}$ such that for any subset $S\subseteq P$ which is in convex position, the subfamily $\varphi(S)$ is also in convex position.
\end{theorem}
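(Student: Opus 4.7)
The plan is to attach to each body $F \in \mathcal{F}$ a representative interior point $p_F$, set $P = \{p_F : F \in \mathcal{F}\}$ with $\varphi(p_F) = F$, and construct through each pair of representatives a pseudoline $\ell(p_F, p_G)$ whose arrangement captures the convex geometry of $\mathcal{F}$. For each unordered pair $\{F, G\}$ of noncrossing bodies there exist two outer common supporting lines $\lambda^{-}_{F,G}$ and $\lambda^{+}_{F,G}$: each is tangent to both bodies and has $F \cup G$ on one side, and together with arcs of $\partial F$ and $\partial G$ they bound $\conv(F \cup G)$. I would define $\ell(p_F, p_G)$ as a simple curve that goes to infinity along $\lambda^{-}_{F,G}$ on one end, enters $F$, passes through $p_F$, traverses a thin channel from $p_F$ to $p_G$ hugging the boundary of $\conv(F \cup G)$ on one side, exits through $G$, and follows $\lambda^{+}_{F,G}$ to infinity on the other end. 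Thus $\ell(p_F, p_G)$ agrees with $\lambda^{-}_{F,G} \cup \lambda^{+}_{F,G}$ outside a bounded neighborhood of $F \cup G$, its interior excursion threading $p_F$ and $p_G$; a generic choice of the representatives and connecting arcs ensures $\ell(p_F, p_G) \cap P = \{p_F, p_G\}$.

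The first step is to show that $L(P) = \{\ell(p_F, p_G) : F \neq G\}$ is a pseudoline arrangement, i.e.\ that any two such curves cross in exactly one point. Outside a large disk the curves coincide with pairs of outer tangent lines, which cross exactly once after a small generic perturbation into general position. The main technical obstacle is ruling out extra crossings in the bounded region: if $\ell(p_F, p_G)$ and $\ell(p_H, p_I)$ met twice, then tracing through the crossings together with the outer-tangent behaviour would force two bodies in $\{F, G, H, I\}$ to have boundaries meeting in more than two points, contradicting the noncrossing hypothesis. Pairs sharing a body (say $H = F$) require separate care, handled by arranging the tangent contact points on $\partial F$ in a consistent cyclic order so that the several pseudolines through $p_F$ enter and exit $F$ coherently.

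With the pseudoline arrangement in place, I would verify the convex-position correspondence. Suppose $S \subseteq P$ is in pseudo-convex position but $\varphi(S)$ is not in convex position; then some $F = \varphi(p)$ satisfies $F \subseteq \conv\bigl(\bigcup_{q \in S \setminus \{p\}} \varphi(q)\bigr)$, so in particular $p_F$ lies in that ordinary convex hull. By Carath\'eodory there exist $G_1, G_2, G_3 \in \varphi(S \setminus \{p\})$ and points $q_i \in G_i$ with $p_F \in \conv\{q_1, q_2, q_3\}$. Inspecting the finitely many combinatorial types of four noncrossing convex bodies together with their pairwise outer tangents, one checks that in every such type $p$ fails to be an extreme point of the four-point pseudo-configuration $\{p, p_{G_1}, p_{G_2}, p_{G_3}\}$, contradicting convex position of $S$ via the pseudo-configuration Carath\'eodory already cited in the excerpt. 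The reduction of this last step to a finite four-body case check is what makes the argument manageable; the main work lies in the pseudoline-arrangement verification, where the noncrossing hypothesis is used in an essential way.
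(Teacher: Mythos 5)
First, note that the paper does not actually prove this statement: it is quoted as the union of Lemmas~2.4 and~2.7 of Dobbins--Holmsen--Hubard \cite{dobbs1}, so there is no in-paper proof to compare against. Judged on its own, your proposal outlines a plausible geometric strategy, but it has genuine gaps at exactly the two places where all the difficulty of the theorem is concentrated. The first is the claim that the curves $\ell(p_F,p_G)$ built from outer common tangents form a pseudoline arrangement. As described, each curve has two ends going to infinity along two \emph{different} lines $\lambda^{-}_{F,G}$ and $\lambda^{+}_{F,G}$, and you do not specify which ray of each tangent is used; with the wrong choice the ends of two such curves need not interleave on the circle at infinity, in which case they cross an \emph{even} number of times and no perturbation can repair the parity. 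Even with the right choice, the assertion that a second crossing ``would force two bodies to have boundaries meeting in more than two points'' is not an argument: the bounded ``channel'' portions of the curves are chosen with enormous freedom and can easily be made to cross one another, or a tangent ray, twice without any body boundaries intersecting at all. Pinning down a canonical construction and proving the single-crossing property is the substance of the theorem, not a technicality to be deferred.

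The second gap is the convex-position correspondence. Your reduction via ordinary Carath\'eodory only yields $p_F\in\conv\{q_1,q_2,q_3\}$ with the $q_i$ possibly lying in fewer than three distinct bodies, and, more seriously, the step ``inspect the finitely many combinatorial types of four noncrossing bodies and check that $p$ is not extreme in the pseudo-configuration'' presupposes that four noncrossing bodies in general position realize only the order types of four points --- which is essentially the statement being proved. This is why Dobbins, Holmsen and Hubard proceed differently: they never construct pseudolines geometrically. They orient each triple of bodies according to the cyclic order in which the bodies appear on the boundary of the convex hull of their union, verify that these orientations satisfy the CC-system axioms (this is where the noncrossing and general-position hypotheses enter), and then invoke the representation theorem for CC-systems / rank-3 acyclic oriented matroids to obtain the pseudo-configuration $P$; the convex-position correspondence then follows because convex position is a purely combinatorial function of the triple orientations. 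That combinatorial route sidesteps both of the obstacles above, and you should either follow it or supply complete proofs of the two missing steps.
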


It follows from this result that $c'(n)\le b(n)$ for all $n$. (In fact, it was shown in \cite{dobbs1} that $c'(n) = b(n)$ for all $n\geq 3$.) In view of this, Theorem~\ref{gen conf ES} immediately implies the following.

\begin{theorem} \label{main bft}
Given any $n\geq 3$, let $c'(n)$ denote the smallest number such that every family of at least $c'(n)$ pairwise noncrossing convex bodies in general position in the plane has $n$ members in convex position. Then we have
\[c'(n) \leq 2^{n+O\left(\sqrt{n\log n}\right)}.\]
\end{theorem}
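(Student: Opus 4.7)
The plan is to derive this bound directly by composing the two results already at hand: Theorem~\ref{gen conf ES}, which bounds $b(n)$, the Erd\H os--Szekeres-type function for pseudo-configurations, and Theorem~\ref{pseudo to noncross}, which converts pairwise noncrossing families of convex bodies in general position into pseudo-configurations in a way that preserves convex position on subsets. The only real content specific to this statement is assembling these pieces; all of the analytic work sits inside Theorem~\ref{gen conf ES}.

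Concretely, I would argue as follows. Fix $n\geq 3$ and let $C$ be a constant large enough that $b(n)\leq 2^{n+C\sqrt{n\log n}}$ for all $n$, as guaranteed by Theorem~\ref{gen conf ES}. Let $\mathcal F$ be any family of pairwise noncrossing convex bodies in general position with $|\mathcal F|\geq 2^{n+C\sqrt{n\log n}}$. By Theorem~\ref{pseudo to noncross}, there is a pseudo-configuration $P$ and a bijection $\varphi:P\to\mathcal F$ such that for every $S\subseteq P$ in convex position the image $\varphi(S)$ is again in convex position. Since $|P|=|\mathcal F|\geq b(n)$, Theorem~\ref{gen conf ES} provides an $n$-element subset $S\subseteq P$ in convex position, whence $\varphi(S)\subseteq\mathcal F$ is an $n$-element subfamily in convex position. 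This shows $c'(n)\leq b(n)$, and combined with the bound on $b(n)$ yields
\[
c'(n)\leq b(n)\leq 2^{n+O(\sqrt{n\log n})},
\]
which is exactly the claim.

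There is essentially no obstacle here: Theorem~\ref{main bft} is a formal corollary of the two invoked results, and the only thing worth noting is the direction of the reduction — convexity facts about families of noncrossing convex bodies are transported back to the simpler, purely combinatorial setting of pseudo-configurations, where the Suk-type argument behind Theorem~\ref{gen conf ES} can be applied. Any improvement to the error term in Theorem~\ref{gen conf ES} would, by the same two-line argument, automatically give the corresponding improvement for $c'(n)$ and hence for $c(n)$ as well, since $c(n)\leq c'(n)$.
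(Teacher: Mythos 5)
Your argument is correct and is exactly the paper's: Theorem~\ref{pseudo to noncross} gives $c'(n)\le b(n)$, and Theorem~\ref{gen conf ES} bounds $b(n)$, so the statement follows as an immediate corollary. No issues.
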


Here is a summary of the known  bounds on the various functions discussed above:
\[2^{n-2}+1 \leq e(n) \leq c(n) \leq c'(n) = b(n) \leq 2^{n+O(\sqrt{n\log n})}.\]
(Note that none of the inequalities are known to be strict.)

\medskip

The rest of this note is organized as follows. After highlighting two auxiliary results in Section~2, we present the proof of Theorem~\ref{gen conf ES} in Section~3. In the end of Section~3 we show how to optimize the constant appearing in the error-term.

\section{Auxiliary results}
To follow Suk's line of argument, we recall two results needed for the proof: a combinatorial version of the ``cup-cap'' theorem (Theorem~\ref{transitive}) and a variant of a positive fraction Erd{\H o}s--Szekeres theorem \cite{baranyES} (Theorem~\ref{clusterings}). For future reference, we also collect some simple observations on pseudo-configurations in convex positions (Observations~\ref{trivial} and~\ref{trivial2}).

\medskip

\paragraph{\bf Transitive colorings.} Let $S$ be a finite set with a given linear ordering $\prec$, and suppose the ordered triples $s_i \prec s_j \prec s_k$ are partitioned into two parts $T_1 \cup T_2$.  This partition is called a {\em transitive coloring} if every $s_1 \prec s_2 \prec s_3 \prec s_4$ in $S$ and $i\in\{1,2\}$ satisfy
\[(s_1,s_2,s_3),(s_2,s_3,s_4) \in T_i \Rightarrow  (s_1,s_2,s_4),(s_1,s_3,s_4) \in T_i. \]
Transitive colorings were introduced in \cite{FPSS} and \cite{hubsuk}. The following statement can be proved in precisely the same way as the ``cup-cap'' theorem; see \cite{moshkovitz} for an alternative proof.

\begin{theorem}\label{transitive}
\cite{FPSS, hubsuk} \; Let $S$ be a finite set with a given linear ordering and let $T_1\cup T_2$ be a transitive coloring of the triples of $S$. If
\begin{equation}\label{cupcapbound}
|S| > \binom{k+l-4}{k-2},
\end{equation}
then there exists a $k$-element subset $S_{1}\subseteq S$ such that every triple of $S_{1}$ is in $T_1$, or an $l$-element subset $S_{2}\subseteq S$ such that every triple of $S_{2}$ is in $T_2$.
\end{theorem}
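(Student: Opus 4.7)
The plan is to mimic the classical cup-cap proof, replacing its geometric extension by a combinatorial one drawn from the transitive coloring axiom. Call a subset $A\subseteq S$ a \emph{$T_i$-clique} if every ordered triple from $A$ lies in $T_i$, and let $b(k,l)$ denote the smallest cardinality of $S$ that forces a $T_1$-clique of size $k$ or a $T_2$-clique of size $l$. I would aim for the Pascal-type recurrence
\[
b(k,l) \;\leq\; b(k-1,l) + b(k,l-1) - 1 \qquad (k,l\geq 3),
\]
together with the trivial bases $b(2,l)=b(k,2)=2$ (any two elements form a $T_i$-clique vacuously). A standard induction then yields $b(k,l) \leq \binom{k+l-4}{k-2}+1$, which is exactly \eqref{cupcapbound}.

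The heart of the argument is the following \emph{extension lemma}. If $A = \{s_1 \prec \cdots \prec s_m\}$ is a $T_1$-clique and $t \succ s_m$ satisfies $(s_{m-1}, s_m, t)\in T_1$, then $A\cup\{t\}$ is still a $T_1$-clique; symmetrically, if $A$ is a $T_2$-clique and $t\prec s_1$ satisfies $(t,s_1,s_2)\in T_2$, then $\{t\}\cup A$ is a $T_2$-clique. I would prove the first version by backward induction on $j$, establishing that $(s_i,s_j,t)\in T_1$ for all $i<j\leq m$. Given the case $j$, applying the transitive axiom to the quadruple $s_i \prec s_{j-1} \prec s_j \prec t$ (with $i<j-1$) turns the known triples $(s_i,s_{j-1},s_j)\in T_1$ (from the clique) and $(s_{j-1},s_j,t)\in T_1$ (inherited from case $j$) into $(s_i,s_{j-1},t)\in T_1$; the base $j=m$ is handled by the single quadruple $s_i \prec s_{m-1} \prec s_m \prec t$.

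With the extension lemma in place, the recurrence is proved by the standard endpoint dichotomy. Suppose $|S| = b(k-1,l) + b(k,l-1) - 1$ and, for contradiction, that $S$ contains neither a $T_1$-clique of size $k$ nor a $T_2$-clique of size $l$. Let $E\subseteq S$ consist of the $\prec$-maxima of $T_1$-cliques of size $k-1$ in $S$. If $|S\setminus E|\geq b(k-1,l)$, then $S\setminus E$ would contain either a $T_2$-clique of size $l$ (contradicting our assumption) or a $T_1$-clique of size $k-1$ whose maximum must then lie in $E\cap(S\setminus E)$. Otherwise $|E|\geq b(k,l-1)$, so $E$ contains a $T_2$-clique $R'$ of size $l-1$; let $p$ be its minimum, $q_1$ its next element, and pick a $T_1$-clique $R=\{p_1\prec\cdots\prec p_{k-2}\prec p\}$ witnessing $p\in E$. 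The color of the triple $(p_{k-2},p,q_1)$ decides the matter: the extension lemma produces either the $T_1$-clique $R\cup\{q_1\}$ of size $k$ (if the triple is in $T_1$) or the $T_2$-clique $\{p_{k-2}\}\cup R'$ of size $l$ (if it is in $T_2$).

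The main technical hurdle is the extension lemma: the transitive coloring axiom only relates four consecutive elements at a time, so one must schedule the induction carefully so that both hypotheses of the axiom have already been verified at each step. Once this combinatorial extension is in hand, the remainder of the proof is structurally identical to the classical Erd\H{o}s--Szekeres cup-cap induction on $k+l$.
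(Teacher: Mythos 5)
Your proof is correct, and it is precisely the classical cup--cap argument that the paper invokes for this statement (the paper gives no proof of its own, citing \cite{FPSS, hubsuk} and noting that the proof is ``precisely the same'' as the Erd\H{o}s--Szekeres cup--cap theorem): your extension lemma is the combinatorial abstraction of appending a point to a cup or cap, and the endpoint dichotomy with the shared element $p$ is the standard recurrence step. The bookkeeping (bases $b(2,l)=b(k,2)=2$, the Pascal recurrence, and the edge cases $k=3$ or $l=3$) all checks out.
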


\paragraph{\bf Convex hulls of pseudo-configurations.} Below we collect a few simple observations on the convexity structure of pseudo-configurations. These statements are trivial for the usual notion of convexity, and easy to prove in this more general context.

\begin{obs}\label{trivial}\renewcommand{\qedsymbol}{}
Let $P$ be a pseudo-configuration.
\begin{itemize}
\item[(i)]The convex hull is a monotone operation. That is, for any $X\subseteq Y\subseteq P$, we have ${\rm conv} X\subseteq{\rm conv} Y$.
\item[(ii)] ${\rm conv} X$ is a simply connected closed set, for any $X\subseteq P$.
\item[(iii)] If $X\subseteq P$ is in convex position, then all points of $X$ appear on the boundary of ${\rm conv} X$.
\item[(iv)] Let $k\ge3$, and assume that $X = \{x_1, x_2, \dots, x_k\} \subseteq P$ is in convex position, where the points $x_i$ appear on the boundary of ${\rm conv} X$ in this cyclic order. Then the boundary of ${\rm conv} X$ is the union of the pseudosegments ${\rm conv}\{x_i,x_{i+1}\}$ for $1\le i\le k$. Furthermore, for each $i$, the pseudoline $\ell(x_i,x_{i+1})$ intersects ${\rm conv} X$ in the pseudosegment ${\rm conv}\{x_i,x_{i+1}\}$, and (the rest of) ${\rm conv} X$ lies entirely on one side of $\ell(x_i,x_{i+1})$.  (Indices are understood modulo $k$.) \qed
\end{itemize}
\end{obs}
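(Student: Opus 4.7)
The plan is to handle the four claims in order, using each as input for the next.

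Part (i) is a direct monotonicity argument. Enlarging $X$ to $Y\supseteq X$ only adds pseudosegments to the arrangement whose unbounded face defines the complement of the convex hull. Any path from a point $z$ to infinity that avoids all $Y$-pseudosegments automatically avoids every $X$-pseudosegment, so the unbounded face for $Y$ is contained in that for $X$; taking complements gives $\mathrm{conv}\, X\subseteq\mathrm{conv}\, Y$.

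For part (ii), I would observe that the $\binom{|X|}{2}$ pseudosegments of $X$ form a finite planar arrangement of simple arcs whose only meeting points are common endpoints in $X$ or the proper crossings inherited from the pseudoline arrangement $L(P)$. The complement in $\mathbb{R}^2$ has finitely many connected components, exactly one of which is unbounded, and $\mathrm{conv}\, X$ equals the union of the arrangement together with all bounded components. Thus $\mathrm{conv}\, X$ is closed and bounded; it is connected because every two points of $X$ are joined by a pseudosegment and every bounded face is incident to some pseudosegment, so everything is glued into one piece. Simple connectivity then follows from the standard planar-topology fact that a compact connected subset of $\mathbb{R}^2$ is simply connected if and only if its complement is connected, which is true by construction.

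Part (iii) is the most substantive step. Suppose, toward a contradiction, that $p\in X$ lies in the interior of $\mathrm{conv}\, X$. Since $X$ is in convex position, $p\notin\mathrm{conv}(X\setminus\{p\})$, so $p$ lies in the unbounded face $U$ of the arrangement associated with $X\setminus\{p\}$. Applying (ii) to $X\setminus\{p\}$ shows that $U\cup\{\infty\}$ is an open disk in $S^2$. The $|X|-1$ pseudosegments from $p$ to the other points of $X$ meet $U$ in simple arcs from $p$ to $\partial U$; cutting $U\cup\{\infty\}$ along these arcs yields finitely many regions, and the one containing $\infty$ has $p$ on its boundary because at least one of the sectors around $p$ formed by these arcs lies in that region. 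Pulled back to the $X$-arrangement, this region is contained in the unbounded face, so $p\in\partial\,\mathrm{conv}\, X$, a contradiction.

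For part (iv), (iii) guarantees that every $x_i$ lies on $\partial\,\mathrm{conv}\, X$, and together with the cyclic-order hypothesis this boundary is a simple closed curve traversing the $x_i$ in that order. Between consecutive vertices $x_i$ and $x_{i+1}$, the pseudosegment $\mathrm{conv}\{x_i,x_{i+1}\}$ lies in $\mathrm{conv}\, X$ by (i), joins $x_i$ to $x_{i+1}$ by a simple arc, and avoids the other $x_j$, so it must coincide with the corresponding boundary arc: any other situation would produce a region bounded by these two arcs inside $\mathrm{conv}\, X$ but with the remaining $x_j$ all on the opposite side of $\ell(x_i,x_{i+1})$, contradicting either (i) or convex position. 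The final assertion that $\ell(x_i,x_{i+1})$ meets $\mathrm{conv}\, X$ exactly in this pseudosegment, with $\mathrm{conv}\, X$ on one side, then follows, since any further intersection would force $\ell(x_i,x_{i+1})$ to cross some other pseudoline of $L(P)$ more than once, contradicting the pseudoline-arrangement axiom. The main obstacle I expect is the topological bookkeeping in (iii), in particular justifying that $U\cup\{\infty\}$ is a genuine topological disk so that cutting along arcs produces the expected sectoral decomposition; a cleaner alternative is a purely local argument at $p$, producing a sector adjacent to $p$ through which one can escape to infinity while avoiding every pseudosegment of $X$.
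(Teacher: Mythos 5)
The paper contains no proof of Observation~\ref{trivial} to compare against: it is stated with an empty proof, and the surrounding text merely declares the claims ``easy to prove in this more general context.'' So your write-up has to be judged on its own. Your parts (i) and (ii) are correct and essentially forced by the definition: adding pseudosegments can only shrink the unbounded face, and $\mathrm{conv}\,X$ is the arrangement of pseudosegments together with its bounded faces, which is compact, connected, and has connected complement.

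The genuine gap is in part (iii), and it is the one you flag yourself. You cut the disk $U\cup\{\infty\}$ only along the components of the pseudosegments $\mathrm{conv}\{p,q\}$, $q\in X\setminus\{p\}$, that contain $p$; but the faces of the full $X$-arrangement inside $U$ are determined by \emph{all} components of $\mathrm{conv}\{p,q\}\cap U$. If such a pseudosegment leaves $U$ and re-enters, the resulting chord with both endpoints on $\partial U$ can separate every sector at $p$ from $\infty$, and then $p$ need not lie on the boundary of the unbounded face of the $X$-arrangement. Excluding this requires an idea your sketch does not supply: prove (iii) and (iv) jointly by induction on $|X|$, so that when treating $p$ you already know $\partial\,\mathrm{conv}(X\setminus\{p\})$ is the closed curve of consecutive pseudosegments, each lying on a pseudoline with $\mathrm{conv}(X\setminus\{p\})$ entirely on one side; then once $\mathrm{conv}\{p,q\}$ exits through the relative interior of a boundary pseudosegment $\mathrm{conv}\{a,b\}$, it is confined to the far side of $\ell(a,b)$ and can never reach $q$, so $\mathrm{conv}\{p,q\}\cap U$ is a single arc and your sector argument goes through. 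Part (iv) has the same character: ``contradicting either (i) or convex position'' is not an argument for why the boundary arc between $x_i$ and $x_{i+1}$ coincides with the pseudosegment (nor is it shown that the boundary is a simple closed curve rather than a walk with repeated arcs), and the final ``one side'' claim does not follow from a single double-crossing as you assert, since the pseudoline could in principle re-enter through one boundary edge and leave through a different one. As written, (i) and (ii) are proofs; (iii) and (iv) are plausible outlines with the hard steps still open.
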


\paragraph{\bf Convex clusterings.} Consider the pseudo-configuration described in Observation~\ref{trivial}(iv): Let $X = \{x_1, x_2, \dots, x_k\}$ be a $k$-element subset of $P$ in convex position, where $k\ge 3$, and suppose that the points $x_i$ appear on the boundary of $\conv X$ in this cyclic order. We define the $i$-th {\em spike} of $X$, denoted by $S_{i}$, to be the open region consisting of the points of the plane separated from the interior of $\conv X$ by the pseudoline $\ell(x_i,x_{i+1})$, but not separated from $\conv X$ by $\ell(x_{i-1},x_i)$ and by $\ell(x_{i+1},x_{i+2})$. This is a connected region bounded by the pseudosegment $\conv\{x_i,x_{i+1}\}$ and by portions of the pseudolines $\ell(x_{i-1}, x_i)$ and $\ell(x_{i+1},x_{i+2})$. It is either a triangle-like bounded region or an unbounded region of three sides; see Fig.~\ref{spike figure}.

\begin{figure}\centering
\begin{tikzpicture}
\node at (0,0) {\includegraphics[height=7cm]{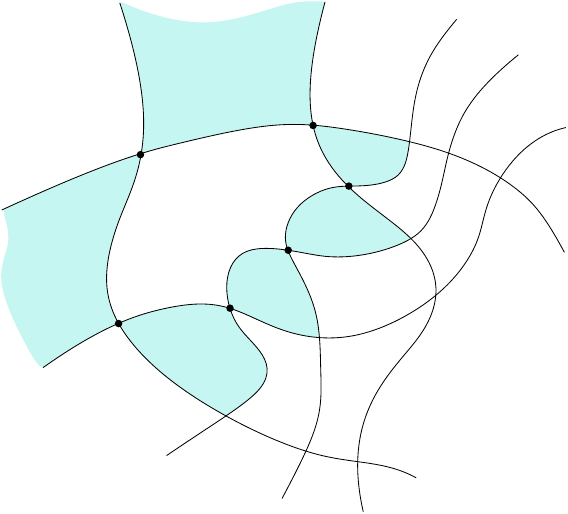}};
\node at (-1.76,1.2) {\fts $x_1$};
\node at (.2,1.6) {\fts $x_2$};
\node at (.54,1.08) {\fts $x_3$};
\node at (-.18,.26) {\fts $x_4$};
\node at (-.96,-.5) {\fts $x_5$};
\node at (-2.14,-.66) {\fts $x_6$};
\node at (-.8,2.4) {\fts $S_{1}$};
\node at (1.1,1.35) {\fts $S_{2}$};
\node at (.77,.45) {\fts $S_{3}$};
\node at (-.2,-.4) {\fts $S_{4}$};
\node at (-1.1,-1.2) {\fts $S_{5}$};
\node at (-2.9,-.2) {\fts $S_{6}$};
\end{tikzpicture}
\caption{A pseudo-configuration in convex position where the spikes are shaded.}
\label{spike figure}
\end{figure}

\begin{obs}\label{trivial2}\renewcommand{\qedsymbol}{}
Let $1\le i\le k$.
\begin{itemize}
\item[(i)] The line $\ell(x_i,x_{i+1})$ is disjoint from every spike and separates $S_i$ from all other spikes $S_j$ ($j\ne i$). In particular, the spikes are pairwise disjoint.
\item[(ii)] A point $p\in P\setminus X$ belongs to the spike $S_ i$ if and only if $X'=X\cup\{p\}$ is in convex position and $p$ appears on the boundary of ${\rm conv} X'$ between $x_i$ and $x_{i+1}$. In particular, whether $X\cup\{p\}$ is in convex position is determined by which region $p$ belongs in the arrangement of pseudolines spanned by $X$.   \qed
\end{itemize}
\end{obs}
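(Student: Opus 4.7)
The plan is to deduce both parts directly from the definition of a spike, from Observation~\ref{trivial}(iv) on the boundary structure of a convex-position subset, and from the defining property of a pseudoline arrangement that any two distinct pseudolines cross in exactly one point.

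For part (i), note first that the spike $\ess i$ lies, by definition, in the open region bounded by $\ell(x_i,x_{i+1})$ on the side opposite to the interior of $\conv X$, and is therefore disjoint from $\ell(x_i,x_{i+1})$. To show that any other spike $\ess j$ lies on the opposite (i.e.\ $\conv X$) side, I would split into two cases. When $j\in\{i-1,i+1\}$, the pseudoline $\ell(x_i,x_{i+1})$ is itself one of the two bounding pseudolines of $\ess j$, and the ``not separated from $\conv X$'' clause in the definition of $\ess j$ directly places it on the $\conv X$-side. When $j\notin\{i-1,i,i+1\}$, the three pseudolines bounding $\ess j$ are all distinct from $\ell(x_i,x_{i+1})$. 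Since $\ess j$ is a connected open region (as stated right after its definition) whose boundary meets $\conv X$ along the pseudosegment $\conv\{x_j,x_{j+1}\}$, which lies on the $\conv X$-side of $\ell(x_i,x_{i+1})$, a single-crossing analysis shows that $\ell(x_i,x_{i+1})$ meets each bounding pseudoline of $\ess j$ at a single point lying outside the spike, so $\ess j$ cannot cross $\ell(x_i,x_{i+1})$.

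For part (ii), I would prove the two directions separately. For the forward direction, suppose $p\in\ess i$. The three clauses in the definition of $\ess i$ give the correct sides of $p$ with respect to $\ell(x_{i-1},x_i)$, $\ell(x_i,x_{i+1})$, and $\ell(x_{i+1},x_{i+2})$; the symmetric reasoning from part~(i) places $p$ on the $\conv X$-side of every remaining $\ell(x_m,x_{m+1})$. Combined with Observation~\ref{trivial}(iv), this is exactly the separation structure required for the cyclic sequence $(x_1,\dots,x_i,p,x_{i+1},\dots,x_k)$ to be the boundary order of $\conv(X\cup\{p\})$, and in particular $X\cup\{p\}$ is in convex position. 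For the converse, if $X\cup\{p\}$ is in convex position with $p$ between $x_i$ and $x_{i+1}$, then Observation~\ref{trivial}(iv) applied to $X\cup\{p\}$ directly yields the three conditions in the definition of $\ess i$. Finally, the ``in particular'' statement follows from (i): the spikes, together with $\conv X$, are unions of cells of the arrangement of the edge-pseudolines $\ell(x_m,x_{m+1})$, hence also of the full pseudoline arrangement spanned by $X$, so the cell containing $p$ determines membership in a spike.

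The main obstacle in both parts is the non-adjacent case, namely verifying that a spike does not spill across a far-away edge-pseudoline. In the straight-line model this is immediate from convexity of the polygon, but in the pseudoline setting it requires a careful appeal to the single-crossing property to rule out pathological topologies of the pseudolines that bound the spike.
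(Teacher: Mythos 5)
The paper gives no proof of this observation at all: it is one of the statements the authors explicitly file under ``trivial for the usual notion of convexity, and easy to prove in this more general context,'' relying on their earlier remark that verifications are to be done by reducing to configurations of at most five points (via Carath\'eodory's theorem for pseudo-configurations) and checking the finitely many combinatorial types. So there is no argument in the paper to compare yours against, and I have to judge your sketch on its own. Its skeleton is reasonable --- the adjacent-spike case of (i) really is immediate from the definition, and the converse direction of (ii) really does follow from Observation~\ref{trivial}(iv) applied to $X\cup\{p\}$ --- but the two places you yourself flag as the crux are left as assertions, and they are exactly where the work lives.

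Concretely: in part (i), for $j\notin\{i-1,i,i+1\}$ you claim that ``a single-crossing analysis shows that $\ell(x_i,x_{i+1})$ meets each bounding pseudoline of $\ess{j}$ at a single point lying outside the spike.'' That sentence is the theorem, not a proof of it: you must actually locate the unique crossing of $\ell(x_i,x_{i+1})$ with $\ell(x_{j-1},x_j)$ relative to the arc of $\ell(x_{j-1},x_j)$ that bounds $\ess{j}$ (using Observation~\ref{trivial}(iv) to pin down where each edge-pseudoline meets $\conv X$, and a parity/connectivity argument for where its two unbounded rays can go), and nothing in your write-up does this. In part (ii), the forward direction infers that $X\cup\{p\}$ is in convex position because $p$ has ``exactly the separation structure required for $(x_1,\dots,x_i,p,x_{i+1},\dots,x_k)$ to be the boundary order of $\conv(X\cup\{p\})$.'' But Observation~\ref{trivial}(iv) only describes the boundary of a set \emph{already known} to be in convex position; it is a necessary condition, not a sufficient one, and you cannot read it backwards. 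The route consistent with the paper's toolkit is to invoke Carath\'eodory's theorem (a set is in convex position iff every four points are) and check each quadruple $\{p,x_a,x_b,x_c\}$ using the separation data --- which is precisely how the paper argues in Observations~\ref{joining verticals} and~\ref{joining horizontals}, via the last statement of the very observation you are proving. Until these two steps are carried out, the proof is an outline of where the difficulty sits rather than a resolution of it.
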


For the usual notion of convexity in the Euclidean plane, the following statement was proved by P{\'o}r and Valtr \cite{partitionES}. It is a slight strengthening of a result of Pach and Solymosi~\cite{pach-soly1} that can be obtained by simple double counting. Since we will use this statement for pseudo-configurations, to make our paper self-contained, we translate its proof into this setting.

\begin{theorem} \label{clusterings}
Let $k\geq 3$ be an integer, and let $P$ be a pseudo-configuration with $|P| = N \geq 2^{4k}$. Then there exists a subset $X = \{x_1, x_2, \dots, x_k\} \subset P$ in convex position such that the sets $P_i$ of all points of $P$ lying in the $i$-th spike, $i=1,\ldots,k,$ satisfy the inequality
\begin{equation}\label{eq:cluster}
 \prod_{i=1}^k |P_i|\geq \frac{N^k}{2^{8k^2}}.
\end{equation}
\end{theorem}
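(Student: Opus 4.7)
Our strategy follows that of P\'or--Valtr \cite{partitionES} (a refinement of Pach--Solymosi \cite{pach-soly1}), translated to the pseudo-configuration setting via Theorem~\ref{weak pseudo ES} and the inheritance property from Observation~\ref{trivial2}(ii).

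First, since $N \ge 2^{4k} = 4^{2k}$, Theorem~\ref{weak pseudo ES} produces a convex subset $Y = \{y_1, \ldots, y_m\} \subseteq P$ of size $m = 2k$, with vertices indexed in cyclic order along $\partial \conv Y$. (When $N$ is much larger than $2^{4k}$, we can afford a correspondingly larger $m$, gaining flexibility in the balancing step below.) Set $\beta_l := |P \cap \ess l(Y)|$ for $1 \le l \le m$. The crucial structural input, which is a direct consequence of Observation~\ref{trivial2}(ii), is the following inheritance property: for any $k$-subset $X = \{y_{i_1}, \ldots, y_{i_k}\} \subseteq Y$ taken in cyclic order, every point $p \in P \cap \ess l(Y)$ with $i_j \le l < i_{j+1}$ lies in $\ess j(X)$, and every vertex $y_l \in Y \setminus X$ with $i_j < l < i_{j+1}$ lies in $\ess j(X)$ as well. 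Hence
\[
 |P_j(X)| \;\ge\; (i_{j+1} - i_j - 1) + \sum_{l = i_j}^{i_{j+1} - 1} \beta_l.
\]

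The task thus reduces to choosing the cyclic indices $i_1 < \cdots < i_k$ in $\{1, \ldots, m\}$ so that the product of the $k$ resulting arc-weights on the right-hand side is at least $N^k/2^{8k^2}$. When the weights $\beta_l$ are balanced (no single $\beta_l$ dominates), a greedy cyclic partition into $k$ arcs of roughly equal weight supplies the bound via AM--GM. When some $\beta_l$ dominates, we recurse on the sub-pseudo-configuration $P \cap \ess l(Y)$, using that $P' \subseteq P$ implies $|P_i(X')| \ge |P'_i(X')|$ for any convex $X' \subseteq P'$, so a good $k$-subset found inside the dominant spike already serves. When $N$ is close to the threshold $2^{4k}$, the target $N^k/2^{8k^2}$ drops below $1$ and the bound becomes trivial provided each chosen arc contains at least one intermediate vertex of $Y$, which is guaranteed by taking $m = 2k$ and selecting $X$ as every other vertex of $Y$.

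\emph{The main obstacle} is that we must control the \emph{product} (not merely the minimum) of spike sizes, and must combine the balanced and concentrated cases through the recursion in a way that matches the constant $2^{8k^2}$ exactly. The exponent $8k^2$ reflects a loss of roughly $4^{2k} = 2^{4k}$ at each of $O(k)$ recursion levels, together with the cost of the final balancing step. Verifying that points in the non-spike regions of $Y$ (the interior of $\conv Y$ and the ``corner'' regions lying beyond two adjacent edges) contribute negligibly to the budget, and pinning down the constant precisely, is the most delicate part of the argument.
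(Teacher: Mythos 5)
Your plan takes a fundamentally different route from the paper, and it contains a genuine gap that the paper's argument is specifically designed to avoid. The paper does not fix a single convex $2k$-set $Y$ and balance arcs; it double counts. By Theorem~\ref{weak pseudo ES}, every $4^{2k}$-subset of $P$ contains a convex $2k$-subset, so $P$ contains more than $(N/4^{2k})^{2k}$ convex $2k$-subsets; each such $Y$ is \emph{supported} by exactly two of its $k$-subsets (those meeting the boundary of $\conv Y$ alternately); hence some $k$-set $X$ supports more than $(N/4^{2k})^{2k}/\binom{N}{k} > N^k/2^{8k^2}$ convex $2k$-sets $Y$; and by Observation~\ref{trivial2}(ii) every supported $Y$ arises by choosing one point from each spike of $X$, so the number of supported $Y$'s is at most $\prod_{i}|P_i|$. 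That averaging over \emph{all} convex $2k$-sets is the whole proof.

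The gap in your version is this: once you fix one convex $2k$-set $Y$, nothing forces any substantial part of $P$ to lie in the union of the spikes of $Y$. All $N-2k$ remaining points may sit in the interior of $\conv Y$, or in exterior regions separated from $\conv Y$ by two or more of the pseudolines $\ell(y_l,y_{l+1})$ --- such points belong to no spike of $Y$ at all (Observation~\ref{trivial2}(ii)). In that situation every $\beta_l=0$: the weights are perfectly ``balanced,'' your AM--GM step produces arc-products of size at most $1$, and your ``dominant spike'' recursion never fires because no spike contains a point. So the case analysis as stated does not cover what is arguably the typical situation, and the closing sentence promising that non-spike points ``contribute negligibly'' is precisely the unproved (and, for a single fixed $Y$, false) assertion. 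Even if you add a recursion into the interior of $\conv Y$, each level only yields a bound of the form $(\text{fraction of }N)^k/2^{8k^2}$, so the constant degrades multiplicatively with the recursion depth, which is not bounded by $O(k)$; recovering $N^k/2^{8k^2}$ would require a genuinely new accounting. I recommend replacing the plan by the double-counting argument sketched above.
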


\begin{proof}
Let $P$ be a pseudo-configuration with $|P| = N\geq 2^{4k}$. By Theorem \ref{weak pseudo ES}, every $4^{2k}$-element subset $Q\subseteq P$ contains a $2k$-element subset $R\subset Q$ in convex position. Therefore, by double-counting, $P$ has at least
\[\frac{\binom N{4^{2k}}}{\binom{N-2k}{4^{2k}-2k}} = \frac{\binom{N}{2k}}{\binom{4^{2k}}{2k}}>\left(\frac N{4^{2k}}\right)^{2k}\]
distinct $2k$-element subsets in convex position.

Given a $2k$-element subset $Y$ in convex position, we say that a $k$-element subset $X\subset Y$ {\em supports} $Y$ if the points of $Y$ along the boundary of $\conv Y$ alternately belong to $X$ and $Y\setminus X$. Note that $Y$ is supported by two subsets.

Since the number of $k$-element subsets of $P$ in convex position is at most $\binom{N}{k}$,
there exists a $k$-element subset $X$ which supports at least
\[\frac{\left(\frac N{4^{2k}}\right)^{2k}}{\binom Nk} > \frac{N^k}{2^{8k^2}}\]
distinct $2k$-element subsets in convex position. By Observation~\ref{trivial2}(ii), if $X$ supports $Y$, then the points of $Y\setminus X$ belong to distinct spikes of $X$, which implies inequality \eqref{eq:cluster}.
\end{proof}

\section{Proof of Theorem \ref{gen conf ES}}

Consider a sufficiently large fixed pseudo-configuration $P$, let $k\ge 4$ be an even integer, and let $X = \{x_1, x_2, \dots, x_k\}\subset P$ be a $k$-element subset in convex position such that its points appear on the boundary of $\conv X$ in this cyclic order. Suppose that $X$ meets the requirements of Theorem \ref{clusterings}. As before, let $S_{1}, S_{2}, \dots, S_{k}$ denote the spikes of $X$ and let $P_i = P \cap S_{i}$. The indices are taken modulo $k$.

\paragraph{\bf Vertical and horizontal orderings on ${P_i}$.} Let $p$ and $q$ be distinct points in $P_i$. We write
\[p\prec_i^v q \iff \conv \{x_{i-1},p,x_{i+2}\} \subset \conv \{x_{i-1},q,x_{i+2}\},\]
\[p\prec_i^h q \iff \conv\{x_{i-1},q\} \cap \conv \{x_{i+2},p\} \neq \emptyset,\]
where the superscripts $v$ and $h$ refer to the adjectives ``vertical'' and ``horizontal'', respectively.

\begin{obs} \label{orthogonality}
Let $1\le i\le k$.
\begin{itemize}
\item[(i)] Both $\prec_i^v$ and $\prec_i^h$ are partial orders on $P_i$.
\item[(ii)] Any two distinct elements of $P_i$ are comparable by either $\prec_i^v$ or $\prec_i^h$, but not by both.
\end{itemize}
\end{obs}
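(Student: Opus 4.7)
My plan is to prove part (ii) first by a case analysis of the 4-point sub-pseudo-configuration $Q=\{x_{i-1},p,q,x_{i+2}\}$, and then derive part (i) by combining (ii) with standard order-theoretic observations (for $\prec_i^v$) and with a small sub-pseudo-configuration argument (for $\prec_i^h$).

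For part (ii), the key geometric restriction is that $p,q\in P_i$ lie in the same region $\ess{i}$ of $L(X)$, hence on the same open side of the pseudoline $\ell(x_{i-1},x_{i+2})\in L(X)$, while $x_{i-1}$ and $x_{i+2}$ lie on that pseudoline itself. Consequently, any triangle formed by three of the four points of $Q$ has its interior on the $p$--$q$ side of $\ell(x_{i-1},x_{i+2})$, and general position rules out boundary incidences, so neither $x_{i-1}$ nor $x_{i+2}$ can lie in such a triangle. Therefore $Q$ falls into exactly one of four cases: convex with cyclic order $(x_{i-1},p,q,x_{i+2})$ or $(x_{i-1},q,p,x_{i+2})$; $p\in\conv\{x_{i-1},q,x_{i+2}\}$; or $q\in\conv\{x_{i-1},p,x_{i+2}\}$. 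In the first convex case, Observation~\ref{trivial}(iv) gives that the diagonals $\conv\{x_{i-1},q\},\conv\{x_{i+2},p\}$ cross (so $p\prec_i^h q$) and the opposite sides $\conv\{x_{i-1},p\},\conv\{q,x_{i+2}\}$ are disjoint (ruling out $q\prec_i^h p$), while the two triangles $\conv\{x_{i-1},p,x_{i+2}\},\conv\{x_{i-1},q,x_{i+2}\}$ are the two halves of $Q$ cut by the distinct diagonals $\conv\{p,x_{i+2}\},\conv\{x_{i-1},q\}$, so neither contains the other (ruling out both $\prec_i^v$ relations). The second convex case is symmetric. In the third case, Observation~\ref{trivial}(i) together with $q\notin\conv\{x_{i-1},p,x_{i+2}\}$ (by general position) yields the strict inclusion $\conv\{x_{i-1},p,x_{i+2}\}\subsetneq\conv\{x_{i-1},q,x_{i+2}\}$, so $p\prec_i^v q$; moreover the pseudosegment $\conv\{x_{i+2},p\}$ lies in the interior of the triangle $\conv\{x_{i-1},q,x_{i+2}\}$, disjoint from the side $\conv\{x_{i-1},q\}$, so no $\prec_i^h$ relation between $p$ and $q$ can hold. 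The fourth case is symmetric.

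For part (i), irreflexivity is immediate (the relations are defined only on distinct points), asymmetry follows from (ii), and transitivity of $\prec_i^v$ is transitivity of strict set inclusion. For the transitivity of $\prec_i^h$, assume $p\prec_i^h q$ and $q\prec_i^h r$, and apply (ii) to the pair $(p,r)$: exactly one of the four possible relations between $p$ and $r$ holds, and I rule out the three alternatives. If $p\in\conv\{x_{i-1},r,x_{i+2}\}$, then the pseudosegment $\conv\{x_{i+2},p\}$ lies inside the triangle $\conv\{x_{i-1},r,x_{i+2}\}$, so the crossing required by $p\prec_i^h q$ sits inside this triangle; since $q\notin\conv\{x_{i-1},r,x_{i+2}\}$ (from $q\prec_i^h r$ and (ii)), the pseudosegment $\conv\{x_{i-1},q\}$ enters at the vertex $x_{i-1}$, passes through the crossing, and must exit through the opposite side $\conv\{r,x_{i+2}\}$, yielding $r\prec_i^h q$ and contradicting $q\prec_i^h r$ by (ii). The case $r\in\conv\{x_{i-1},p,x_{i+2}\}$ is symmetric. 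Finally, $r\prec_i^h p$ is ruled out by a cyclic-consistency argument on the 5-point sub-pseudo-configuration $\{x_{i-1},p,q,r,x_{i+2}\}$: the three forced cyclic orders on the 4-subsets $\{x_{i-1},p,q,x_{i+2}\},\{x_{i-1},q,r,x_{i+2}\},\{x_{i-1},r,p,x_{i+2}\}$ together place $p,q,r$ in a $3$-cycle $p<q<r<p$ on the upper arc from $x_{i-1}$ to $x_{i+2}$, which no $5$-point arrangement can realize.

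The main obstacle is formalizing this last cyclic-consistency step cleanly, especially when the 5-point set fails to be in convex position (which can happen when one of $p,q,r$ lies inside the quadrilateral spanned by the other two of $p,q,r$ together with $x_{i-1},x_{i+2}$); the required verification is a short case analysis of 5-point pseudo-configurations that the paper explicitly permits the reader to carry out.
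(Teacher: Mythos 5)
Your proof is correct and follows essentially the same route as the paper: part (ii) by a case analysis of the four-point pseudo-configuration $\{x_{i-1},p,q,x_{i+2}\}$ using that $p,q$ lie on the same side of $\ell(x_{i-1},x_{i+2})$, and transitivity of $\prec_i^h$ reduced to a check of five-point pseudo-configurations (which the paper defers entirely to the reader, while you rigorously eliminate the two cases where $p$ and $r$ are $\prec_i^v$-comparable and defer only the cyclic case $r\prec_i^h p$). The residual five-point verification you flag is exactly the kind of check the paper explicitly licenses, so no gap remains relative to the paper's own standard.
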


\begin{proof}
The definition of $\prec_i^v$ clearly implies that it is a partial order. To see that the same is true for $\prec_i^h$, one has to show that if $p\prec_i^hq\prec_i^hr$ for three points $p,q,r\in P_i$, then $p\prec_i^hr$. This can be done by checking the few possible pseudo-configurations of the five points $x_{i-1}$, $x_{i+2}$, $p$, $q$ and $r$.

To prove (ii), it is sufficient to consider the pseudo-configurations consisting of only four points: $x_{i-1}$, $x_{i+2}$, and two points $p$ and $q$ from $P_i$. Using the fact that $p$ and $q$ lie on the same side of $\ell(x_{i-1}x_{i+2})$, one can show that out of the four relations $p\prec_i^v q$, $q\prec_i^v p$, $p\prec_i^h q$, and $q\prec_i^h p,$ precisely one will hold. Consider the four open regions into which the pseudolines $\ell(px_{i-1})$ and $\ell(px_{i+2})$ partition the plane. The region in which $q$ lies uniquely determines which of the above four relations will hold.
\end{proof}

For $1\leq i \leq k$, let $v_i$ denote the length of the longest chain in $P_i$ with respect to $\prec^v_i$, and let $h_i$ denote the length of the longest chain in $P_i$ with respect to $\prec^h_i$. By Observation \ref{orthogonality} and by (the easy part of) Dilworth's theorem, we have
\begin{equation} \label{eq:dilworth}
|P_i| \leq v_i h_i.
\end{equation}

\paragraph{\bf Further observations concerning points and spikes.} As before, the following observations are trivial for the usual notion of convexity in the Euclidean plane. Here we show that they also hold for pseudo-configurations.

\begin{obs} \label{twospikes}
For any pair of distinct points $p,q\in P$, the pseudoline $\ell(p,q)$ intersects at most two spikes of $X$.
\end{obs}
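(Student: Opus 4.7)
The plan is to argue by contradiction and exploit the defining axiom that any two pseudolines in the arrangement cross exactly once. Specifically, I would suppose that $\ell = \ell(p,q)$ meets three distinct spikes, say $\ess{a}$, $\ess{b}$, $\ess{c}$, and pick representative points $y_a \in \ell \cap \ess{a}$, $y_b \in \ell \cap \ess{b}$, $y_c \in \ell \cap \ess{c}$. Since $\ell$ is topologically a line, these three points inherit a linear order along $\ell$; relabeling the three spikes if necessary, I may assume $y_b$ lies strictly between $y_a$ and $y_c$ on $\ell$.

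Next, I would invoke Observation \ref{trivial2}(i): the pseudoline $\ell(x_b, x_{b+1})$ separates $\ess{b}$ from every other spike. Consequently $y_a$ and $y_c$ lie on one side of $\ell(x_b, x_{b+1})$, while $y_b$ lies on the other side. Tracing $\ell$ from $y_a$ to $y_b$, the arc must cross $\ell(x_b, x_{b+1})$ at least once; tracing $\ell$ from $y_b$ to $y_c$, it must cross $\ell(x_b, x_{b+1})$ at least once again. Hence $\ell$ and $\ell(x_b, x_{b+1})$ meet in at least two distinct points.

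This contradicts the fact that any two distinct pseudolines in $L(P)$ intersect in exactly one point. The only thing to double-check is that $\ell \neq \ell(x_b, x_{b+1})$; this is automatic, because $\ell(x_b, x_{b+1})$ is part of the boundary of the open region $\ess{b}$ and therefore cannot contain the interior point $y_b$.

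I do not expect a serious obstacle here: the whole argument hinges on the observation that a pseudoline cannot re-enter the region cut off from it by another pseudoline, and the only bookkeeping is the middle-point choice among $\{y_a, y_b, y_c\}$. The main role of the statement in the sequel is presumably to bound, for each pseudoline $\ell(p,q)$, the number of spikes that $\ell(p,q)$ can pass through, so the cleanness of the bound ``at most two'' is exactly what is needed.
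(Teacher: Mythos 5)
Your argument is correct and is essentially the paper's own proof: both assume $\ell(p,q)$ meets three spikes, identify the ``middle'' one, and use Observation~\ref{trivial2}(i) to force two distinct crossings with $\ell(x_j,x_{j+1})$, contradicting the pseudoline axiom. You simply spell out the bookkeeping (choice of representatives, the middle point, distinctness of the two crossings) that the paper leaves implicit.
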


\begin{proof}
Assume for contradiction that $\ell(p,q)$ intersects three separate spikes $S_i$, $S_j$, and $S_l$ in this order. By Observation~\ref{trivial2}(i), this line should intersect $\ell(x_j,x_{j+1})$ twice, a contradiction.
\end{proof}

\begin{obs}\label{vertical separation}
Let $p$ and $q$ be distinct points of $P_i$. If $p\prec_i^v q$, then the pseudoline $\ell(p,q)$ separates spikes $S_{i-1}$ and $S_{i+1}$.
\end{obs}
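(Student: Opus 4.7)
The plan is to prove the separation in three steps. First, I would show that $\ell(p,q)$ separates the vertices $x_{i-1}$ and $x_{i+2}$ of $X$. The hypothesis $p\prec_i^v q$ gives $\conv\{x_{i-1},p,x_{i+2}\}\subseteq\conv\{x_{i-1},q,x_{i+2}\}$, so $p$ lies in the pseudo-triangle with vertices $x_{i-1},q,x_{i+2}$. Restricting attention to the $4$-point sub-pseudo-configuration $\{x_{i-1},x_{i+2},p,q\}$, the point $p$ is interior to the pseudo-triangle with apex $q$. Since $\ell(p,q)$ passes through the vertex $q$ and the interior point $p$, and since $\ell(p,q)$ meets the two adjacent sides $\conv\{x_{i-1},q\}$ and $\conv\{q,x_{i+2}\}$ only at $q$ itself (because $\ell(p,q)$ crosses $\ell(x_{i-1},q)$ and $\ell(x_{i+2},q)$ at most once, and does so at $q$), $\ell(p,q)$ must exit the triangle through the opposite pseudoedge $\conv\{x_{i-1},x_{i+2}\}$, crossing it strictly between $x_{i-1}$ and $x_{i+2}$ and thereby separating these two vertices.

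Second, I would show that $\ell(p,q)$ is disjoint from both $\ess{i-1}$ and $\ess{i+1}$. The pseudoedge $\conv\{x_{i-1},x_{i+2}\}$ is a chord of $\conv X$ dividing it into two pseudo-convex sub-regions: one whose boundary arc on $\partial\conv X$ is $\conv\{x_{i-1},x_i\}\cup\conv\{x_i,x_{i+1}\}\cup\conv\{x_{i+1},x_{i+2}\}$, the other bounded by the chord together with the complementary boundary arc. Since $\ell(p,q)\cap\conv X$ is a connected pseudo-segment crossing this chord once, and $\ell(p,q)$ enters $\conv X$ from $\ess i$ through the base $\conv\{x_i,x_{i+1}\}$ (the two sidewalls of $\ess i$ lie outside $\conv X$), the line must exit $\conv X$ through some edge $\conv\{x_j,x_{j+1}\}$ with $j\notin\{i-1,i,i+1\}$ (indices mod $k$). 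By Observation~\ref{twospikes}, the only spikes met by $\ell(p,q)$ are $\ess i$ and (possibly) $\ess j$; in particular $\ell(p,q)$ meets neither $\ess{i-1}$ nor $\ess{i+1}$. Third, since $\ess{i-1}$ is a connected open region disjoint from $\ell(p,q)$, and $x_{i-1}\in\partial\ess{i-1}$ but $x_{i-1}\notin\ell(p,q)$ (using the axiom $\ell(p,q)\cap P=\{p,q\}$), the spike $\ess{i-1}$ lies entirely in the open half-plane containing $x_{i-1}$; symmetrically $\ess{i+1}$ lies in the half-plane containing $x_{i+2}$, which by the first step is the opposite half-plane.

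The main obstacle is justifying the chord argument in the second step within the pseudo-configuration framework: one must verify that the pseudoedge $\conv\{x_{i-1},x_{i+2}\}$ meets $\partial\conv X$ only at its two endpoints, that $\ell(p,q)\cap\conv X$ is a connected pseudo-segment crossing this chord exactly once, and that $\ell(p,q)$ enters $\conv X$ from $\ess i$ through the base rather than through a sidewall. These all follow from the standard topological properties of pseudo-convex hulls collected in Observation~\ref{trivial} combined with the structure of spikes in Observation~\ref{trivial2}, but require care.
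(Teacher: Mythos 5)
Your proposal is correct and follows essentially the same route as the paper: deduce from $p\in\conv\{x_{i-1},q,x_{i+2}\}$ that $\ell(p,q)$ crosses the pseudosegment $\conv\{x_{i-1},x_{i+2}\}$, conclude that it must therefore reach one of the far spikes, invoke Observation~\ref{twospikes} to rule out any intersection with $\ess{i-1}$ and $\ess{i+1}$, and finish by connectedness. The paper's version is terser---it leaves the chord/exit argument and the final connectedness step implicit---but the underlying argument is identical to yours.
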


\begin{proof}
Since $p\in \conv \{x_{i-1},q,x_{i+2}\}$, the pseudoline $\ell(p,q)$ intersects the pseudosegment $\conv \{x_{i-1},x_{i+2}\}$. This implies that $\ell(p,q)$ has to intersect one of the spikes  $S_{i+2}$, $S_{i+3}$, $\dots$, $S_{i-2}$. By Observation \ref{twospikes}, $\ell(p,q)$ intersects at most two spikes, one of which is $S_{i}$. Thus, it cannot intersect $S_{i-1}$ and $S_{i+1}$, which implies that $S_{i-1}$ and $S_{i+1}$ must be separated by $\ell(p,q)$.
\end{proof}

\begin{obs}\label{horizontal separation}
Let $p$ and $q$ be distinct points of $P_i$. If $p\prec_i^h q$, then all the spikes $S_{i+2}, S_{i+3}, \dots, S_{i-2}$ must lie on the same side of the pseudoline $\ell(p,q)$.
\end{obs}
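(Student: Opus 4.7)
The plan is to establish that $X \cup \{p, q\}$ is itself in convex position, with $p$ and $q$ appearing consecutively between $x_i$ and $x_{i+1}$ in the cyclic order. Once this is shown, Observation~\ref{trivial}(iv) applied to $X \cup \{p, q\}$ gives $\ell(p, q) \cap \conv(X \cup \{p, q\}) = \conv\{p, q\}$, so all of $\conv(X \cup \{p, q\})$ outside this edge, in particular every $x_j$, lies in a single open half-plane $H$ of $\ell(p, q)$.

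To reach this, I would first observe that $p \prec_i^h q$ is precisely the statement that the 4 points $\{x_{i-1}, p, q, x_{i+2}\}$ are in convex position in the cyclic order $x_{i-1}, p, q, x_{i+2}$, since this is what it means for the diagonals $\conv\{x_{i-1}, q\}$ and $\conv\{x_{i+2}, p\}$ to cross. Combining this with the 5-point convex positions of $\{x_{i-1}, x_i, p, x_{i+1}, x_{i+2}\}$ and $\{x_{i-1}, x_i, q, x_{i+1}, x_{i+2}\}$, both inherited from $p, q \in S_i$, I would use the planar Carath\'eodory theorem for pseudo-configurations to verify that the 6-point set $\{x_{i-1}, x_i, p, q, x_{i+1}, x_{i+2}\}$ is in convex position in the cyclic order $x_{i-1}, x_i, p, q, x_{i+1}, x_{i+2}$. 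From this hexagon one reads off that $q$ lies in the spike of $X \cup \{p\}$ between $p$ and $x_{i+1}$: the three conditions defining that spike are (i)~$q$ is separated from $x_i$ by $\ell(p, x_{i+1})$, which holds because in the 4-subset $\{x_i, p, q, x_{i+1}\}$ (in convex position in cyclic order $x_i, p, q, x_{i+1}$) the pseudoline $\ell(p, x_{i+1})$ is a diagonal separating $x_i$ from $q$; (ii)~$q$ is on the same side of the edge $\ell(x_i, p)$ as $x_{i+1}$ (hence as $\conv(X \cup \{p\})$), which again follows from the 4-subset; and (iii)~$q$ is on the inner side of $\ell(x_{i+1}, x_{i+2})$, which is part of the definition of $q \in S_i$. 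Applying Observation~\ref{trivial2}(ii) to $X \cup \{p\}$ then promotes $X \cup \{p, q\}$ to convex position in the required cyclic order.

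For any $r$ in a spike $S_j$ with $j \in \{i+2, \ldots, i-2\}$, the cyclic neighbors of $x_j$ in $X \cup \{p, q\}$ are still $x_{j-1}$ and $x_{j+1}$, since $p, q$ are inserted between $x_i$ and $x_{i+1}$; thus the three pseudolines $\ell(x_{j-1}, x_j), \ell(x_j, x_{j+1}), \ell(x_{j+1}, x_{j+2})$ that cut out $S_j$ are the same in $X$ and in $X \cup \{p, q\}$, and so is the spike region itself. Consequently $r \in S_j(X \cup \{p, q\})$, and Observation~\ref{trivial2}(ii) gives that $X \cup \{p, q, r\}$ is in convex position with $p, q$ still adjacent on the boundary. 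Applying Observation~\ref{trivial}(iv) to $X \cup \{p, q, r\}$ places $r$, together with the rest of $X$, in the same half-plane $H$. Since $r$ was arbitrary, every point of every spike $S_{i+2}, \ldots, S_{i-2}$ lies in $H$. The main obstacle in executing this plan is the 4-subset verification for the hexagon: one has to confirm that each of the six 4-point subsets containing both $p$ and $q$ is in convex position in the cyclic order inherited from the hexagon, which reduces to a combinatorial case analysis on 4-point pseudo-configurations using the given diagonal crossing and the available 5-point data, but is otherwise routine.
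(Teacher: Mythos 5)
Your approach breaks at its very first step: the claim that $p\prec_i^h q$ forces $X\cup\{p,q\}$ into convex position (equivalently, that the hexagon $x_{i-1},x_i,p,q,x_{i+1},x_{i+2}$ is convex) is false, already for straight lines. The relation $\prec_i^h$ constrains $p$ and $q$ only relative to $x_{i-1}$ and $x_{i+2}$; it says nothing about their position relative to $x_i$ and $x_{i+1}$, and the four-point set $\{x_i,p,q,x_{i+1}\}$, which you list among the ``routine'' verifications, need not be in convex position. Concretely, take $k=4$, $X=\{x_1,x_2,x_3,x_4\}=\{(0,0),(1,1),(9,1),(10,0)\}$ and $i=2$, so that $S_2=\{(x,y):y>1,\ y<x,\ y<10-x\}$. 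Let $q=(5,4)$ and $p=(2.75,\,2.25)$; both lie in $S_2$, and the segments $\conv\{x_1,q\}$ and $\conv\{x_4,p\}$ cross (near $(2.795,\,2.236)$), so $p\prec_2^h q$. Yet $p$ lies inside $\conv\{x_2,q,x_3\}=\conv\{(1,1),(5,4),(9,1)\}$, so $\{x_2,p,q,x_3\}$ is not in convex position and neither is $X\cup\{p,q\}$. (This is consistent with Observation~\ref{orthogonality}: here $p\notin\conv\{x_1,q,x_4\}$, so the pair is horizontal rather than vertical.) This failure is not incidental --- it is precisely why the paper must later distinguish inner from outer horizontal chains: horizontal chains do not in general sit convexly together with $X$.

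The statement does not need anything this strong, and the paper's proof uses only the part of your setup that is correct: $p\prec_i^h q$ does imply that $\{x_{i-1},p,q,x_{i+2}\}$ is in convex position in the cyclic order $x_{i-1},p,q,x_{i+2}$, hence that $\ell(p,q)$ misses the pseudosegment $\conv\{x_{i-1},x_{i+2}\}$. All spikes $S_j$ with $j\notin\{i-1,i,i+1\}$ lie in the connected piece of the angular region cut out by $\ell(x_{i-1},x_i)$ and $\ell(x_{i+1},x_{i+2})$ that sits on the far side of that pseudosegment from $S_i$, so they all end up on one side of $\ell(p,q)$. To repair your argument you should retreat to this separation statement rather than trying to insert $p$ and $q$ into the convex position of $X$. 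A secondary issue: in your final paragraph you apply Observation~\ref{trivial2}(ii) to an arbitrary point $r$ of a spike, but that observation concerns points of the pseudo-configuration $P$; an arbitrary point of the plane cannot in general be adjoined to a pseudo-configuration, whereas the observation as stated is about the spike regions themselves.
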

\begin{proof}
All spikes $S_j$ with $j\notin\{i-1,i+1\}$ are on the same side of both pseudolines $\ell(x_{i-1},x_i)$ and $\ell(x_{i+1},x_{i+2})$. The angular region determined by these two pseudolines and containing the above spikes (and the interior of $\conv X$) is cut into two parts by the pseudosegment $\conv\{x_{i-1},x_{i+2}\}$, so that $S_i$ lies on one side and the spikes $S_ j$ with $j\notin\{i-1,i,i+1\}$ on the other. Our assumption $p\prec_i^h q$ implies that the pseudoline $\ell(p,q)$ does not intersect the pseudosegment $\conv\{x_{i-1},x_{i+2}\}$, so the part of the angular region on the other side of this pseudosegment (including all relevant spikes) is on the same side of $\ell(p,q)$, as claimed.
\end{proof}

\begin{figure}\centering
\begin{tikzpicture}
\node at (0,0) {\includegraphics[height=6.3cm]{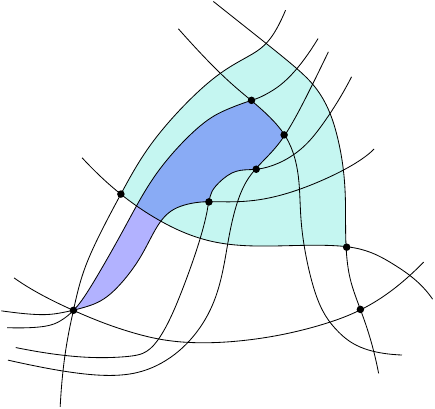}};
\node at (-1.98,-1.92) {\fts $x_{i-1}$};
\node at (-1.76,.13) {\fts $x_i$};
\node at (2.33,-.53) {\fts $x_{i+1}$};
\node at (2.59,-1.76) {\fts $x_{i+2}$};

\node at (.03,-.2) {\fts $p_1$};
\node at (.74,.3) {\fts $p_2$};
\node at (1.3,1.1) {\fts $p_3$};
\node at (.6,1.87) {\fts $p_4$};
\end{tikzpicture}
\caption{A left convex chain $p_1 \prec_i^v p_2 \prec_i^v p_3 \prec_i^v p_4$ in $P_i$ with $\conv\{p_1,p_2,p_3,p_4,x_{i-1}\}$ in darker shade.}
\label{left-convex}
\end{figure}

\paragraph{\bf Vertical convex chains} Let $C \subseteq P_i$ be a chain with respect to $\prec_i^v$. If $\{x_{i-1}\} \cup C$ is in convex position, we call $C$ a {\em left convex chain} in $P_i$. If $\{x_{i+2}\}\cup C$ is in convex position, we call $C$ a {\em right convex chain} in $P_i$.

Note that if $|C|=3$, then $C$ is either a left convex chain or a right convex chain, but not both. This can be verified by checking the pseudo-configuration $C\cup\{x_{i-1},x_{i+2}\}$. Moreover, if we have $p_1\prec_i^v p_2 \prec_i^v p_3 \prec_i^v p_4$ and both $\{p_1, p_2, p_3\}$ and $\{p_2,p_3,p_4\}$ are left (right) convex chains, then $\{p_1,p_2,p_3,p_4\}$ is also a left (right) convex chain. Therefore, the same holds for both $\{p_1,p_2,p_4\}$ and $\{p_1,p_3,p_4\}$. This can be verified by checking the pseudo-configuration $\{p_1,p_2,p_3,p_4, x_{i-1}, x_{i+2}\}$. See Fig.~\ref{left-convex}.

Take a chain $C\subseteq P_i$ of maximal size $|C|=v_i$, totally ordered by $\prec_i^v$. Partition the triples of $C$ into left and right convex chains. In this way, we obtain a transitive coloring. Letting $a_i$ and $b_i$ denote the length of the longest left convex chain and the length of the longest right convex chain in $C$, respectively, by Theorem~\ref{transitive}, we have
\begin{equation} \label{vertical-binomial}
v_i \leq \binom{a_i+b_i-2}{a_i-1}.
\end{equation}
Actually, Theorem~\ref{transitive} only guarantees the existence of large subsets $C_1, C_2\subseteq C$ such that all triples in $C_1$ are left convex chains and all triples in $C_2$ are right convex chains. However, using the above observations and the generalization of Carath\'eodory's theorem to pseudo-configurations, it follows that $C_1$ and $C_2$ themselves must form a left convex chain and a right convex chain, respectively.

\begin{obs} \label{joining verticals}
If $R$ is a right convex chain in $P_{i}$ and $L$ is a left convex chain in $P_{i+1}$, then $R\cup L$ is in convex position.
\end{obs}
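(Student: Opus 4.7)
The plan is to verify $R\cup L$ is in convex position by invoking the pseudo-configuration Carath\'eodory theorem cited in the introduction: it suffices to check that every four-element subset of $R\cup L$ is in convex position. Splitting by how many of the four points come from $R$, the all-$R$ and all-$L$ cases are immediate from $R\cup\{x_{i+2}\}$ and $L\cup\{x_i\}$ being in convex position. Only the mixed 3-1, 1-3, and 2-2 cases need work, and the unifying theme is that $R\subseteq S_i$ and $L\subseteq S_{i+1}$, while by Observation~\ref{vertical separation} the relevant pseudolines keep these spikes on prescribed sides.

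For a 3-1 subset $\{r_a,r_b,r_c,\ell_j\}$ with $r_a\prec_i^v r_b\prec_i^v r_c$, we already know $\{r_a,r_b,r_c,x_{i+2}\}$ is in convex position, so Observation~\ref{trivial2}(ii) places $x_{i+2}$ in some spike of the triangle $\{r_a,r_b,r_c\}$. The same observation will yield convex position of $\{r_a,r_b,r_c,\ell_j\}$ once $\ell_j$ is shown to lie in the same spike, i.e.\ in the same region of the arrangement of the three pseudolines $\ell(r_a,r_b)$, $\ell(r_b,r_c)$, $\ell(r_a,r_c)$. Each of these separates $S_{i-1}$ from $S_{i+1}$ by Observation~\ref{vertical separation}, so $\ell_j\in S_{i+1}$ sits on the $S_{i+1}$-side of each. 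I claim $x_{i+2}$ sits on the same side: the point $x_{i+2}$ lies on the closure boundary of the open region $S_{i+1}$ (as the common endpoint of its defining pseudolines $\ell(x_{i+1},x_{i+2})$ and $\ell(x_{i+2},x_{i+3})$), and none of the three pseudolines passes through $x_{i+2}$ (since the four points $r_a,r_b,r_c,x_{i+2}$ are in convex position, hence in general position), so a sufficiently small neighborhood of $x_{i+2}$ lies entirely on one side of each pseudoline and meets the interior of $S_{i+1}$, placing $x_{i+2}$ on the $S_{i+1}$-side. The mirror 1-3 case is identical with $x_i$ and $S_i$ replacing $x_{i+2}$ and $S_{i+1}$.

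For the 2-2 subset $\{r_a,r_b,\ell_j,\ell_k\}$ with $r_a\prec_i^v r_b$ and $\ell_j\prec_{i+1}^v\ell_k$, I use a double separation. By Observation~\ref{vertical separation} applied to $P_i$ and then to $P_{i+1}$, the pseudoline $\ell(r_a,r_b)$ has the entire spike $S_{i+1}$ (hence both $\ell_j,\ell_k$) on one side, so it cannot cross the pseudosegment $\conv\{\ell_j,\ell_k\}$; symmetrically, $\ell(\ell_j,\ell_k)$ cannot cross $\conv\{r_a,r_b\}$. If the four points were not in convex position, some one of them---say $r_a$---would lie in the pseudo-triangle of the other three. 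The pseudoline $\ell(r_a,r_b)$, passing through the vertex $r_b$ and the interior point $r_a$, would then have to exit this pseudo-triangle through the opposite side $\conv\{\ell_j,\ell_k\}$ (it cannot exit through $\conv\{r_b,\ell_j\}$ or $\conv\{r_b,\ell_k\}$, as those are subpseudosegments of pseudolines meeting $\ell(r_a,r_b)$ only at $r_b$), contradicting the first separation. The three other choices of the interior point are handled identically, using whichever of the two separations matches. The main technical hurdle is the ``same-side'' assertion for $x_{i+2}$ (or $x_i$) in the 3-1 and 1-3 cases, which rests on the topological fact that a boundary point of an open region, lying off a given pseudoline, is on the same side of that pseudoline as a small neighborhood meeting the region's interior.
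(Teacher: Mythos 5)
Your proof is correct and follows essentially the same route as the paper's: reduce to four-point subsets via Carath\'eodory, settle the $2$--$2$ case by the two separations from Observation~\ref{vertical separation} together with the fact that an interior point forces a pseudoline through it to cross the opposite pseudosegment, and settle the $3$--$1$ and $1$--$3$ cases by placing the fourth point in the same cell as the anchor vertex ($x_{i+2}$ or $x_i$) and invoking the last statement of Observation~\ref{trivial2}(ii). The only difference is cosmetic: you spell out the small-neighborhood argument showing the anchor vertex lies in the same cell as the spike, a step the paper asserts without elaboration.
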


\begin{figure}\centering
\begin{tikzpicture}
\node at (0,0) {\includegraphics[height=7.4cm]{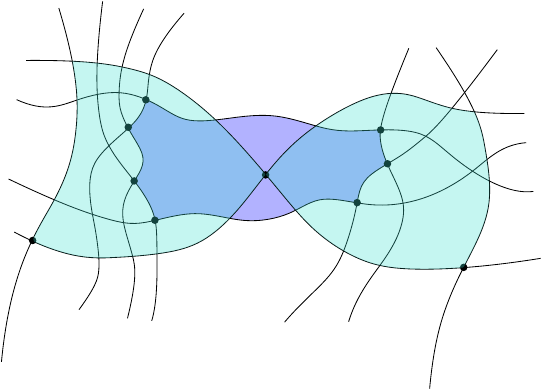}};
\node at (-.1,.89) {\fts $x_{i+1}$};
\node at (-4.33,-1.2) {\fts $x_{i}$};
\node at (4.05,-1.58) {\fts $x_{i+2}$};
\node at (-1.7, .5) {\fts $S_{i}$};
\node at (1.4,.5) {\fts $S_{i+1}$};
\end{tikzpicture}
\caption{Joining a right convex chain $R\subseteq P_i$ and a left convex chain $L\subseteq P_{i+1}$ to form a subset in convex position (convex hull in darker shade).}
\label{vertical-convex}
\end{figure}

\begin{proof}
First, note that for any pseudo-configuration $P$ consisting of four points, if a point $p\in P$ lies in the convex hull of $P\setminus \{p\}$, then any pseudoline passing through $p$ and any other point of $P$ crosses the pseudosegment determined by the other two points of $P$.

To prove the observation, it is enough to show that any four points $p,q,r,s \in R\cup L$ are in convex position. If all of them lie in one of $R$ or $L$, then we are clearly done. Assume first that $r,s\in R$ and $p,q\in L$. By Observation \ref{vertical separation}, the pseudolines $\ell(p,q)$ and $\ell(r,s)$ do not intersect the pseudosegments $\conv\{r,s\}\subset S_{i}$ and $\conv\{p,q\}\subset S_{i+1}$, respectively. Therefore, by the discussion above, the points $p,q,r,s$ are in convex position.

Now consider the case where $p,q,r\in L$ and $s\in R$. Again by Observation \ref{vertical separation}, none of the pseudolines $\ell(p,q)$, $\ell(p,r)$, and $\ell(q,r)$ intersects the spike $S_{i}$. Therefore, $x_i$ and $s$ lie in the same open region determined by the arrangement of these three pseudolines. By the assumption, the set $\{p,q,r,x_i\}$ is in convex position, so by the last statement of Observation \ref{trivial2}(ii) $\{p,q,r,s\}$ is in convex position, as well. The other case, $ p\in L$ and $q,r,s \in R$, can be settled in a similar manner. See Fig. \ref{vertical-convex}.
\end{proof}

\paragraph{\bf Horizontal convex chains.} Let $C\subseteq P_i$ be a chain with respect to $\prec_i^h$. If $\{p,q,r,x_{i-1},x_{i+2}\}$ is in convex position for any three distinct elements $p,q,r$ of $C$, we call $C$ an {\em inner convex chain}. If $\{p,q,r,x_{i-1},x_{i+2}\}$ is {\em not} in convex position for any three distinct elements $p,q,r$ of $C$, we call $C$ an {\em outer convex chain}.

Note that chains of at most two elements are both inner and outer convex chains by this definition.

\begin{obs} \label{inout}
Let $1\leq i \leq k$.
\begin{itemize}
\item[(i)] The partitioning of the triples in a horizontal chain $C$ (ordered by $\prec_i^h$) into inner and outer convex chains is a transitive coloring.
\item[(ii)] The inner and outer convex chains in $P_i$ are in convex position.
\end{itemize}
\end{obs}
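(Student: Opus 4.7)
The plan is to prove both parts by case analysis on small pseudo-configurations, anchored by the pseudo-analogue of Carath\'eodory's theorem (a subset of a pseudo-configuration is in convex position iff every 4-subset is). First I would set up a uniform side convention: by Observation~\ref{horizontal separation} together with the fact that $x_{i-1}$ and $x_{i+2}$ lie on the closures of the ``far'' spikes $\ess{i-2}$ and $\ess{i+3}$, the two points $x_{i-1}$ and $x_{i+2}$ lie on a common side of $\ell(p,q)$ for every $p \prec_i^h q$ in $P_i$; call this the \emph{lower} side of $\ell(p,q)$ and the opposite the \emph{upper} side. A small case analysis on 5-point pseudo-configurations---of the kind the paper defers to the reader---then yields the cleaner description: a triple $p_j \prec_i^h p_k \prec_i^h p_l$ is inner iff $p_k$ lies on the upper side of $\ell(p_j,p_l)$, in which case $\{p_j,p_k,p_l,x_{i-1},x_{i+2}\}$ is a convex pentagon with cyclic boundary order $x_{i-1},p_j,p_k,p_l,x_{i+2}$; and it is outer iff $p_k$ lies on the lower side, in which case $\{p_j,p_l,x_{i-1},x_{i+2}\}$ is a convex quadrilateral with cyclic order $x_{i-1},p_j,p_l,x_{i+2}$ having $p_k$ in its interior.

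For (ii), I would take an arbitrary 4-subset $\{p_a,p_b,p_c,p_d\}$ with $p_a \prec_i^h p_b \prec_i^h p_c \prec_i^h p_d$ of an inner or outer chain $C$; by pseudo-Carath\'eodory it suffices to show it is in convex position, and convex position of $C$ follows. Assume for contradiction one of the four points lies in the triangle of the other three. In the inner case with, say, $p_b \in \conv\{p_a,p_c,p_d\}$: from the convex pentagon $\{p_a,p_b,p_c,x_{i-1},x_{i+2}\}$ (where $p_ap_c$ is a diagonal), $p_b$ is on the upper side of $\ell(p_a,p_c)$; from the convex pentagon $\{p_a,p_c,p_d,x_{i-1},x_{i+2}\}$ (where $p_ap_c$ is an edge), $p_d$ is on the lower side. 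Hence $p_b$ and $p_d$ lie on opposite sides of $\ell(p_a,p_c)$, contradicting $p_b \in \conv\{p_a,p_c,p_d\}$. The three remaining subcases are handled symmetrically with diagonals $\ell(p_b,p_d)$, $\ell(p_a,p_c)$, $\ell(p_b,p_d)$. The outer case is parallel: the four convex quadrilaterals $\{p_j,p_l,x_{i-1},x_{i+2}\}$ together with the interior positions of their middle points pin down the sides of the relevant pseudolines, and the same four subcases yield contradictions.

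For (i), given $(p_1,p_2,p_3)$ and $(p_2,p_3,p_4)$ both inner (resp.\ both outer), I would aim to show convex position of the 6-set $\{p_1,p_2,p_3,p_4,x_{i-1},x_{i+2}\}$ by pseudo-Carath\'eodory. The 4-subset $\{p_1,p_2,p_3,p_4\}$ is convex with cyclic order $p_1,p_2,p_3,p_4$ via the same argument as in (ii); the 4-subset $\{p_1,p_4,x_{i-1},x_{i+2}\}$ is convex because $p_1 \not\prec_i^v p_4$; and the two given pentagons cover most other 4-subsets. The residual 4-subsets---for instance $\{p_1,p_2,p_4,x_{i-1}\}$---admit direct side-of-pseudoline arguments: the diagonal $\ell(p_2,p_4)$ of pentagon 2 places $p_3$ on the upper side and $\{x_{i-1},x_{i+2}\}$ on the lower side, so by the convexity of $\{p_1,p_2,p_3,p_4\}$ also $p_1$ is on the upper side, whence $p_1$ and $x_{i-1}$ lie on opposite sides of $\ell(p_2,p_4)$ and $\{p_1,p_2,p_4,x_{i-1}\}$ is in convex position. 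Once the 6-set is convex, its 5-subsets $\{p_1,p_2,p_4,x_{i-1},x_{i+2}\}$ and $\{p_1,p_3,p_4,x_{i-1},x_{i+2}\}$ are convex, confirming $(p_1,p_2,p_4)$ and $(p_1,p_3,p_4)$ inner (resp.\ outer). The main obstacle is the tedious but finite bookkeeping of these small-case verifications; the classical convex-geometry cup/cap arguments are replaced throughout by explicit side-of-pseudoline arguments hinging on the 5-point cyclic orders.
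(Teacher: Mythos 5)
Your per-triple characterization (a triple is inner iff the middle point and $\conv\{x_{i-1},x_{i+2}\}$ are separated by the pseudoline through the outer two) and your treatment of part (ii) match the paper's, which derives the same equivalent side conditions by a five-point case check and then kills each ``point inside a triangle'' configuration with a side-of-diagonal contradiction. But your argument for part (i) fails in the outer case, and not cosmetically. If $(p_1,p_2,p_3)$ and $(p_2,p_3,p_4)$ are both outer, then by definition $\{p_1,p_2,p_3,x_{i-1},x_{i+2}\}$ is \emph{not} in convex position, so the six-point set $\{p_1,p_2,p_3,p_4,x_{i-1},x_{i+2}\}$ cannot be in convex position either, and your plan of establishing its convexity via pseudo-Carath\'eodory is vacuous. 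Worse, the intended conclusion is inverted: convexity of the five-point subset $\{p_1,p_2,p_4,x_{i-1},x_{i+2}\}$ would certify that $(p_1,p_2,p_4)$ is \emph{inner}, the opposite of what transitivity requires in this branch.

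There is also a circularity in the inner case of (i): you justify convexity of $\{p_1,p_2,p_3,p_4\}$ ``via the same argument as in (ii)'', but that argument consumes the side-of-diagonal data for all four triples, including $(p_1,p_2,p_4)$ and $(p_1,p_3,p_4)$ --- exactly the triples whose color you are trying to determine. (A smaller slip of the same kind: the diagonal $\ell(p_2,p_4)$ of a convex quadrilateral with cyclic order $p_1,p_2,p_3,p_4$ separates $p_1$ from $p_3$, so in your residual case $p_1$ lands on the \emph{lower} side, not the upper one, and opposite-sidedness with respect to $\ell(p_2,p_4)$ alone does not certify convex position of $\{p_1,p_2,p_4,x_{i-1}\}$.) The paper's route avoids both problems and treats the two colors symmetrically: it considers the quadrant determined by $\ell(x_{i-1},p_4)$ and $\ell(x_{i+2},p_1)$ containing $\conv\{p_1,p_4\}$, $p_2$ and $p_3$, traces how $\ell(p_2,p_3)$ must cross it, and concludes that $\conv\{p_1,p_4\}$ and $\conv\{x_{i-1},x_{i+2}\}$ are separated by (resp.\ lie on the same side of) $\ell(p_2,p_3)$; this single statement then delivers the required side conditions for $(p_1,p_2,p_4)$ and $(p_1,p_3,p_4)$ in both cases at once. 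You would need to replace your step for (i) with an argument of this type.
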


\begin{proof}
Consider a horizontal chain $p\prec_i^h q \prec_i^h r$ in $P_i$. By checking the pseudo-configuration $\{p,q,r,x_{i-1},x_{i+2}\}$ we can verify that the following are equivalent:
\begin{itemize}
\item $(p,q,r)$ is an outer (inner) convex chain.
\item $\conv \{x_{i-1}, x_{i+2}\}$ and $r$ are separated by (lie on the same side of) $\ell(p,q)$.
\item $\conv \{x_{i-1}, x_{i+2}\}$ and $p$ are separated by (lie on the same side of) $\ell(q,r)$.
\item $\conv \{x_{i-1}, x_{i+2}\}$ and $q$ lie on the same side of (are separated by) $\ell(p,r)$.
\end{itemize}

Now consider a horizontal chain $p_1 \prec_i^h p_2 \prec_i^h p_3 \prec_i^h p_4$. The pseudolines $\ell(x_{i-1},p_4)$ and $\ell(x_{i+2}, p_1)$ divide the plane into four quadrants, each containing one of the pseudosegments $\conv\{p_1,p_4\}$, $\conv\{p_4,x_{i+2}\}$, $\conv\{x_{i+2},x_{i-1}\}$, $\conv\{x_{i-1},p_1\}$, in this cyclic order. By the ordering $\prec_i^h$, $p_2$ and $p_3$ are contained in the quadrant containing $\conv\{p_1,p_4\}$. Furthermore, the pseudoline $\ell(p_2,p_3)$ must cross this quadrant, entering the boundary ray containing $p_1$, then meeting $p_2$ before $p_3$ and finally exiting the boundary ray containing $p_4$. If both $(p_1,p_2,p_3)$ and $(p_2,p_3,p_4)$ are outer (inner) convex chains, it follows by the observations above that $\conv\{p_1,p_4\}$ and $\conv\{x_{i-1},x_{i+2}\}$ are separated by (lie on the same side of) $\ell(p_2,p_3)$. This implies that $\conv\{x_{i-1},x_{i+2}\}$ and $p_4$ are separated by (lie on the same side of) $\ell(p_1,p_2)$ and $\ell(p_1,p_3)$. Hence,  $(p_1,p_2,p_4)$ and $(p_1,p_3,p_4)$ are both outer (inner) convex chains, which proves part (i). By Carath{\'e}odory's theorem, it suffices to check part (ii) for inner and outer convex chains $p_1 \prec_i^h p_2 \prec_i^h p_3 \prec_i^h p_4$. However, it follows from the discussion above that $\ell(p_1,p_4)$ does not intersect $\conv\{p_2,p_3\}$ and that $\ell(p_2,p_3)$ does not intersect $\conv \{p_1,p_4\}$. As in the proof of Observation \ref{joining verticals}, we obtain that $\{p_1, p_2, p_3, p_4\}$ is in convex position. See Fig.~\ref{outer-convex}.
\end{proof}

\begin{figure}\centering
\begin{tikzpicture}
\node at (0,0) {\includegraphics[height=6cm]{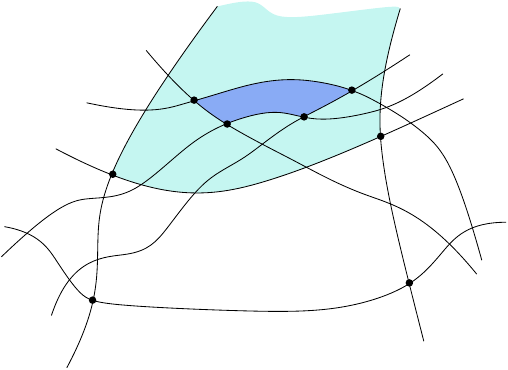}};
\node at (-2.3,-2.13) {\fts $x_{i-1}$};
\node at (-2.45,.4) {\fts $x_{i}$};
\node at (2.44,.7) {\fts $x_{i+1}$};
\node at (2.88,-1.7) {\fts $x_{i+2}$};
\node at (-.87,1.6) {\fts $p_1$};
\node at (-.4,.7) {\fts $p_2$};
\node at (.82,.85) {\fts $p_3$};
\node at (1.62,1.76) {\fts $p_4$};
\end{tikzpicture}
\caption{An outer convex chain $p_1 \prec_i^h p_2 \prec_i^h p_3 \prec_i^h p_4$ in $P_i$ with the $\conv\{p_1,p_2,p_3,p_4\}$ in darker shade. }
\label{outer-convex}
\end{figure}

Letting $c_i$ and $d_i$ denote the length of the longest inner convex chain and the length of the longest outer convex chain in $P_i$, respectively, applying Theorem~\ref{transitive} to the longest horizontal chain in $P_i$ and using Observation~\ref{inout}, we obtain
\begin{equation}\label{horizontal-binomial}
h_i \leq \binom{c_i + d_i -2}{c_i-1}.
\end{equation}

\begin{obs} \label{joining horizontals}
Suppose that $k\ge4$ is even, and let $A_1 \subseteq P_1$, $A_2\subseteq P_2$, $\dots$, $A_k\subseteq P_k$. If each $A_i$ is an inner convex chain, then $A_1 \cup A_3 \cup \cdots \cup A_{k-1}$ is in convex position, and so is $A_2 \cup A_4 \cup \cdots \cup A_k$.
\end{obs}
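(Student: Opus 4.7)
The plan is to invoke Carath\'eodory's theorem for pseudo-configurations, reducing the statement to showing that every $4$-element subset $\{y_1,y_2,y_3,y_4\}$ of $A_1\cup A_3\cup\cdots\cup A_{k-1}$ is in convex position. I split into cases according to how the four points distribute among the odd spikes. If all four lie in a single $A_i$, Observation~\ref{inout}(ii) applies directly, so from now on at least two odd spikes are involved.

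If the points lie in four \emph{distinct} odd spikes $S_{i_1},\dots,S_{i_4}$, I will show that $X\cup\{y_1,\dots,y_4\}$ is itself in convex position, from which $\{y_1,\dots,y_4\}$ inherits convex position as a subset (an easy consequence of the monotonicity in Observation~\ref{trivial}(i)). The idea is to add $y_1,\dots,y_4$ to $X$ one at a time and apply Observation~\ref{trivial2}(ii) at each step: because $k$ is even and every $i_j$ is odd, the vertices of $X\cup\{y_1,\dots,y_{j-1}\}$ neighbouring $x_{i_j}$ and $x_{i_j+1}$ are still $x_{i_j-1}$ and $x_{i_j+2}$ (the even indices $i_j\pm 1$ have not been occupied by any previously inserted point). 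Consequently the spike of $X\cup\{y_1,\dots,y_{j-1}\}$ at the edge $(x_{i_j},x_{i_j+1})$ coincides with the spike $S_{i_j}$ of $X$ and therefore already contains $y_j$.

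For the splits that share a spike but with at most two points per spike (the $2{+}2$ and $2{+}1{+}1$ patterns), I use the standard characterization that four points are in convex position iff some partition of them into two pairs has both connecting pseudolines non-separating the opposite pair. Partitioning so that any two points in a common spike remain together, Observation~\ref{horizontal separation} handles the pseudoline through the same-spike pair---every spike $S_m$ with $m\notin\{i-1,i,i+1\}$, and in particular every other odd spike, lies on one side of it---while Observation~\ref{twospikes} handles the other pseudoline, whose endpoints sit in distinct spikes, so every other spike (and thus the remaining two points) lies on one side.

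The main obstacle is the $3{+}1$ split: $y_1\prec_i^h y_2\prec_i^h y_3$ in the inner chain $A_i$, together with $y_4\in A_j$, $j$ odd and $j\ne i$. The inner-chain hypothesis provides a convex pentagon $\{y_1,y_2,y_3,x_{i-1},x_{i+2}\}$ with boundary cyclic order $x_{i-1},y_1,y_2,y_3,x_{i+2}$. The strategy is to apply Observation~\ref{trivial2}(ii) with the triangle $T=\{y_1,y_2,y_3\}$ (in convex position by Observation~\ref{inout}(ii)) as reference set and to verify that $y_4$ lies in the spike of $T$ at the edge $(y_1,y_3)$. This amounts to three side-checks: $y_4$ opposite $y_2$ across $\ell(y_1,y_3)$, on the same side as $y_3$ across $\ell(y_1,y_2)$, and on the same side as $y_1$ across $\ell(y_2,y_3)$. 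For each of these three pseudolines, Observation~\ref{horizontal separation} (applied to the $\prec_i^h$-comparable pair in $A_i$ that defines it) places every spike $S_m$ with $m\notin\{i-1,i,i+1\}$---and hence $S_j$, since $j$ is odd and distinct from $i$---on the same side as the anchors $x_{i-1}$ and $x_{i+2}$, using that these anchors lie on the boundaries of $S_{i-2}$ and $S_{i+2}$, which belong to that same family. The pentagon then identifies that common side as the desired one: for the edges $\ell(y_1,y_2)$ and $\ell(y_2,y_3)$ it is the interior side of the pentagon (Observation~\ref{trivial}(iv)), which contains $y_3$ and $y_1$ respectively; for the diagonal $\ell(y_1,y_3)$, the convex pentagon is split by $\ell(y_1,y_3)$ into the triangle $T$ and the quadrilateral $\{y_1,y_3,x_{i+2},x_{i-1}\}$ lying on opposite sides, so $y_2$ and $\{x_{i-1},x_{i+2}\}$---and hence $y_2$ and $y_4$---end up on opposite sides. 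With the three conditions in place, Observation~\ref{trivial2}(ii) yields $\{y_1,y_2,y_3,y_4\}$ in convex position, completing the case analysis.
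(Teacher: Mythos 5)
Your proof is correct and follows essentially the same route as the paper: reduce to four-point subsets via Carath\'eodory and settle each distribution pattern using Observations~\ref{horizontal separation}, \ref{twospikes} and~\ref{trivial2}(ii). The only differences are cosmetic --- in the $3{+}1$ case the paper simply notes that $x_{i-1}$ and the fourth point lie in the same region of the arrangement of the three pseudolines spanned by the triple, so the last statement of Observation~\ref{trivial2}(ii) applies with $x_{i-1}$ as witness and there is no need to identify the relevant spike of the triangle explicitly; and in the all-distinct-spikes case the paper observes directly that $\ell(x_i,x_{i+1})$ separates each point from the other three, rather than inserting the points into $X$ one at a time.
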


\begin{proof}
The proof follows in the same way as Observation \ref{joining verticals}, and we repeatedly use the fact mentioned at the beginning of that proof. It suffices to prove that any four points $p_1,p_2,p_3,p_4 \in A_1 \cup A_3 \cup \cdots \cup A_{k-1}$ are in convex position. If all the points lie in one chain, we are done. Consider the case where three points belong to the same chain, say $p_1,p_2,p_3\in A_{i_1}$ and $p_4\in A_{i_2}$ with $i_1\neq i_2$. By Observation \ref{horizontal separation}, $x_{i_1-1}$ and $p_4$ belong to the same open region determined by the pseudolines $\ell(p_1,p_2), \ell(p_1,p_3), \ell(p_2,p_3)$. Therefore, by the last statement of Observation \ref{trivial2}(ii), the convexity of $\{p_1, p_2, p_3, x_{i_1-1}\}$ implies that $\{p_1, p_2, p_3, p_4\}$ is in convex position.

If one of the chains contains exactly two of our points, say $p_1,p_2 \in A_i$, then neither $p_1$ nor $p_2$ can be in the convex hull of the other three points, as Observation~\ref{horizontal separation} implies that the pseudoline $\ell(p_1,p_2)$ does not intersect the pseudosegment $\conv\{p_3,p_4\}$.

\begin{figure}\centering
\begin{tikzpicture}
\node at (0,0) {\includegraphics[height=9.7cm]{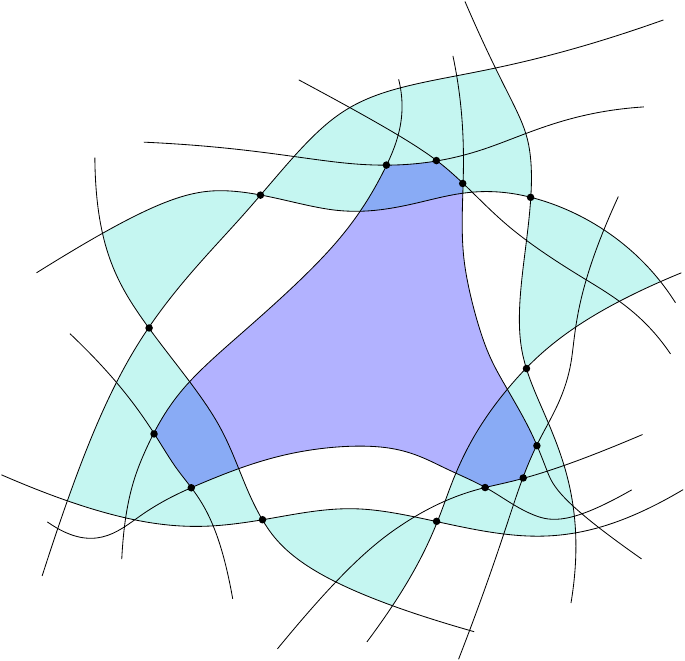}};
\node at (-1.15,1.7) {\fts $x_1$};
\node at (2.48,1.8) {\fts $x_2$};
\node at (2.47,-.5) {\fts $x_3$};
\node at (1.54,-3.06) {\fts $x_4$};
\node at (-1.26,-3.03) {\fts $x_5$};
\node at (-2.54,0.02) {\fts $x_6$};
\end{tikzpicture}
\caption{Joining inner convex chains $A_1\subseteq P_1, A_3\subseteq P_3$, and $A_5\subseteq P_5$ to form a subset in convex position (convex hull in darker shade).}
\label{horizontal-convex}
\end{figure}

To finish the proof, we need to verify that if one of the chains contains exactly one of our points, say $p_1\in A_i$, then $p_1$ is not in the convex hull of the other three points. This follows from the fact that $\ell(x_i,x_{i+1})$ separates $p_1$ from $p_2$, $p_3$ and $p_4$. See Fig.~\ref{horizontal-convex}.
\end{proof}

\paragraph{\bf Proof of Theorem \ref{gen conf ES}}
Let $P$ be a pseudo-configuration on $N$ points, and suppose that $P$ does not contain $n$ points in convex position. Let $k$ be an even integer to be specified later, and let $X = \{x_1, x_2, \dots, x_k\}\subseteq P$ be a subset in convex position whose existence is guaranteed by Theorem \ref{clusterings}. 
As above, for the sets $P_i$ of all points of $P$ contained in the $i$-th spike of $X$, $i=1,\dots, k$,  we define
\begin{itemize}
    \item[] $v_i$ -- the length of the longest chain $C^v_i$ with respect to $\prec^v_i$.
    \item[] $h_i$ -- the length of the longest chain $C^h_i$ with respect to $\prec^h_i$.
    \item[] $a_i$ -- the length of the longest left convex chain in $C^v_i$.
    \item[] $b_i$ -- the length of the longest right convex chain in $C^v_i$.
    \item[] $c_i$ -- the length of the longest inner convex chain in $C^h_i$.
    \item[] $d_i$ -- the length of the longest outer convex chain in $C^h_i$.
\end{itemize}

By Observation~\ref{inout}(ii), we have $d_i <n$. By Observation \ref{joining verticals}, we have
\begin{equation}
b_i + a_{i+1} < n
\end{equation}
for all $i$, and, by Observation~\ref{joining horizontals},
\begin{equation}
c_1+c_2+\cdots + c_k < 2n.
\end{equation}
Combining these with inequalities \eqref{eq:cluster}--\eqref{horizontal-binomial}, we obtain

\begin{eqnarray*}
\frac{N^k}{2^{8k^2}} & \leq & \prod_{i=1}^k|P_i|  \\
& \leq & \prod_{i=1}^k v_ih_i  \\ &\leq & \prod_{i=1}^k \binom{a_i+b_i-2}{a_i-1} \binom{c_i+d_i-2}{c_i-1} \\
& < & \prod_{i=1}^k 2^{a_i+b_i} 
d_i^{c_i} < 2^{kn+2n\log n},
\end{eqnarray*}
which gives us
\begin{equation*}
N < 2^{n+\frac{2n\log n}{k} +8k}.
\end{equation*}
Setting $k$ to be the smallest even integer greater than or equal to $\frac{1}{2}\sqrt{n\log n}$, gives the estimate
\[\pushQED{\qed}
N=O\left(2^{n+8\sqrt{n\log n}}\right).\qedhere
\popQED
\]

\paragraph{\bf Optimizing the error term.} Here we improve the error term in our previous estimate by showing the bound 
\[b(n) \leq 2^{n + \left(\frac{8\sqrt{2}}{3}+o(1)\right)\sqrt{n\log n}}.\]
The first improvement is a refinement of Theorem \ref{clusterings}.
\begin{prop}\label{refinement} Let $k\geq 3$ be an integer, and let $P$ be a pseudo-configuration with $|P| = N \geq 2^{\left(1+o(1)\right)4k}$. Then one of the following hold.
\begin{enumerate}
    \item \label{2koverk} There exists a subset $X = \{x_1, x_2, \dots, x_k\} \subset P$ in convex position such that the sets $P_i$ of all points of $P$ in the $i$-the spike of $X$, $i=1, 2, \dots, k$, satisfy
    \[\prod_{i=1}^k |P_i| \geq 2^{-\frac{8}{3}k^2}N^k.\]
    \item \label{4kover2k} There exists a subset $X' = \{x_1', x_2', \dots, x_{2k}'\} \subset P$ in convex position such that the sets $P'_i$ of all points of $P$ in the $i$-the spike of $X'$, $i=1, 2, \dots, 2k$, satisfy
    \[\prod_{i=1}^{2k}|P'_i| \geq 2^{-\frac{40}{3}k^2 - o(k^2)}N^{2k}.\]
\end{enumerate}
\end{prop}
\begin{proof}
Let $f_j = f_j(P)$ denote the number of $j$-element subset of $P$ that are in convex position. Looking back at the proof of Theorem \ref{clusterings}, we see that for the optimal set $X$, the quantity $\prod_{i=1}^k|P_i|$ is bounded below by $f_{2k}/f_k$. Similarly we have $\prod_{i=1}^{2k}|P'_i|\geq f_{4k}/f_{2k}$ for the optimal set $X'$. Trivially we have $f_k \leq N^k$, and using our (preoptimized) bound on $b(n)$, with the same double-counting as before we have  
$f_{4k} \geq 2^{-16k^2 - o(k^2)} N^{4k}$. The claim now follows from whether or not $f_{2k} \geq 2^{-\frac{8}{3}k^2} N^{2k}$. 
\end{proof}

Here is another improvement. In the proof of Theorem \ref{gen conf ES} we used the estimate $\binom{c_i+d_i-2}{d_i-1} < d_i^{c_i}$, where $d_i<n$ and $\sum_ic_i<2n$, which gave us $\prod_i \binom{c_i+d_i-2}{d_i-1} < n^{2n}$. Instead, if we use the more precise estimate
\[
\binom{c_i+d_i-2}{c_i-1} < \binom{2n}{c_i} <  \left( \frac{2en}{c_i} \right)^{c_i} 
\leq k^{c_i}e^{2n/k},
\] then we get
\begin{equation} \label{2ndimprov}
\prod_{i=1}^k \binom{c_i+d_i-2}{c_i-1} < 
(ek)^{2n}. \end{equation}

\medskip

Now we combine these two improvements. Let $P$ be a pseudo-configuration on $N$ points and suppose $P$ does not contain $n$ points in convex position. We apply the same argument as before to each of the cases in Proposition \ref{refinement}, using the better estimate from \eqref{2ndimprov}. In case \eqref{2koverk} we obtain the inequality 
\[2^{-\frac{8}{3}k^2}N^k \leq \prod_{i=1}^k |P_i| < 2^{kn+2n\log (ek)},\]
and in case \eqref{4kover2k} we obtain the inequality
\[2^{-\frac{40}{3}k^2 - o(k^2)}N^{2k} \leq \prod_{i=1}^{2k}|P'_i| < 2^{2kn + 2n\log (2ek)}.\] Setting the $k$ to be the smallest even integer greater than or equal to $\frac{\sqrt{n\log n}}{2\sqrt{2}}$, either case gives us the desired  bound $N< 2^{n + \left( \frac{8\sqrt{2}}{3} +o(1) \right)\sqrt{n\log n}}$.

\bigskip
\footnotesize
\noindent\textit{Acknowledgments.}

A.~F.~Holmsen partially  
supported by the National Research Foundation of Korea (NRF) grant funded by the Korean government (Ministry of Science and ICT) (No. 2020R1F1A1A0104849011).

H.~N.~Mojarrad supported by Swiss National Science Foundation grant P2ELP2\_178313.

J.~Pach partially supported by the National Research, Development and Innovation Office, NKFIH, project KKP-133864, the Austrian Science Fund (FWF), grant Z 342-N31 and by the Ministry of Education and Science of the Russian Federation in the framework of MegaGrant No.\ 075-15-2019-1926.

G.~Tardos partially supported by the Cryptography ``Lend\"ulet'' project of the Hungarian
Academy of Sciences and by the National Research, Development and Innovation Office (NKFIH)
grants K-116769, K-132696 and KKP-133864, by the ERC Synergy Grant ``Dynasnet'' No. 810115 and by the Ministry of Education and Science of the Russian Federation in the framework of MegaGrant No.\ 075-15-2019-1926.

\end{document}